\newcommand\numberthis{\addtocounter{equation}{1}\tag{\theequation}}
\newtheorem{definition}{Definition}
\newtheorem{theorem}{Theorem}
\newtheorem{lemma}{Lemma}
\newtheorem{corollary}{Corollary}
\newtheorem{assumption}{Assumption}
\newtheorem{remark}{Remark}
\begin{document}
%
\title{Input-Feedforward-Passivity-Based Distributed Optimization Over Jointly Connected Balanced Digraphs}
%
%
%

\author{Mengmou~Li,~
        Graziano~Chesi,~
        and~Yiguang~Hong
\thanks{A preliminary version of this work was presented in the 58th IEEE Conference on Decision and Control, Nice, France \cite{li2019input}.}
\thanks{The work of Y.~Hong was supported by National Natural Science Foundation of China under Grant 61733018.}
\thanks{M.~Li and G.~Chesi are with the Department
of Electrical and Electronic Engineering, The University of Hong Kong, Hong Kong, China (e-mail: mmli.research@gmail.com; chesi@eee.hku.hk).}
\thanks{Y.~Hong is with the Key Laboratory of Systems and
Control, Academy of Mathematics and Systems Science, Chinese Academy of Sciences, 100190, Beijing, China (e-mail: yghong@iss.ac.cn).}

}

\maketitle
\begin{abstract}
In this paper, a distributed optimization problem is investigated via input feedforward passivity.
First, an input-feedforward-passivity-based continuous-time distributed algorithm is proposed. It is shown that the error system of the proposed algorithm can be decomposed into a group of individual input feedforward passive (IFP) systems that interact with each other using output feedback information.
Based on this IFP framework, convergence conditions of a suitable coupling gain are derived over weight-balanced and uniformly jointly strongly connected (UJSC) topologies.
It is also shown that the IFP-based algorithm converges exponentially when the topology is strongly connected.
Second, a novel distributed derivative feedback algorithm is proposed based on the passivation of IFP systems.
While most works on directed topologies require knowledge of eigenvalues of the graph Laplacian, the derivative feedback algorithm is fully distributed, namely, it is robust against randomly changing weight-balanced digraphs with any positive coupling gain and without knowing any global information.
Finally, numerical examples are presented to illustrate the proposed distributed algorithms.
\end{abstract}
%
\begin{IEEEkeywords}
Continuous-time algorithms, input feedforward passivity, weight-balanced digraphs, uniformly jointly strongly connected topologies, derivative feedback.
\end{IEEEkeywords}

%
\IEEEpeerreviewmaketitle

\section{Introduction}
%
%
%
%
\IEEEPARstart{D}{istributed} optimization over multi-agent systems has been widely investigated in recent years, due to its broad applications in various aspects including wireless networks, smart grids, and machine learning. In addition to discrete-time algorithms (e.g., \cite{nedic2015distributed,nedic2017achieving,xie2018distributed}),
a variety of continuous-time distributed algorithms have been proposed to solve distributed optimization problems \cite{wang2010control,yi2015distributed,li2018generalized,zeng2018distributedb}.
Continuous-time algorithms can be implemented in hardware devices like analog circuits \cite{forti2004generalized}, and achieve tasks such as motion coordination of multi-agent systems \cite{martinez2007motion}.
Studying optimization in the continuous-time domain benefits from numerous control techniques for stability analysis and also opens up the possibility to address commonly encountered problems in large-scale networks, such as disturbance rejection \cite{wang2015distributed}, robustness to delays or uncertainties \cite{zhang2017distributed,hatanaka2018passivity}, or channel constraints \cite{kia2015distributed}.
However, most of the proposed algorithms are only for undirected topologies and not applicable to directed topologies \cite{wang2010control,yi2015distributed,li2018generalized,zeng2018distributedb}. To deal with this difficulty, some parameters in the algorithms can be tuned to stabilize the dynamics \cite{gharesifard2014distributed, li2017distributed}, while some variants of the standard proportional-integral algorithm are proposed \cite{kia2015distributed,deng2017distributed}. However, most of these methods often employ coordinate transformation along with complicated Lyapunov function candidates in convergence analysis, which does not preserve network structures, and requires eigenvalues of the graph Laplacian to design some parameters \cite{kia2015distributed,gharesifard2014distributed,deng2017distributed,zhu2018continuous}. Compared with these methods, a systematic approach that focuses more on the distributed interconnection of agents in the network is needed.

It is well known that dissipativity (as well as its special case, passivity) is a useful tool for stability analysis and control design \cite{chopra2006passivity,chopra2012output,liu2015output}. Recently, there emerged some passivity-based algorithms on distributed optimization under some communication constraints \cite{stegink2017unifying, tang2017distributed, hatanaka2018passivity, yamashita2020passivity}. However, these passivity-based algorithms can only be applied over undirected graphs, while it is shown that output consensus can be achieved over directed graphs through simple output feedback interconnections of passive systems \cite{chopra2006passivity,chopra2012output}. Motivated by these works, we aim to study distributed algorithms over directed graphs via passivity techniques.
On one hand, we conjecture that it is in general difficult to directly construct a distributed algorithm that can be interpreted as output feedback interconnections of passive systems.
On the other hand, works in \cite{qu2014modularized, proskurnikov2017simple, li2019consensus} point out that output consensus can be achieved over directed graphs even among IFP (or passivity-short) systems. Therefore, if a distributed algorithm inherits input feedforward passivity, it can be directly applied to weight-balanced digraphs through output feedback interconnections.
As a byproduct of having the IFP properties, the distributed algorithm is also applicable over uniformly jointly strongly connected (UJSC) topologies.
This feature is remarkable since it greatly reduces communication costs, and hence is more practical in large-scale networks.
%
Though the problem of UJSC switching topologies has been considered in discrete-time algorithms \cite{nedic2015distributed,nedic2017achieving,xie2018distributed}, to the best of our knowledge, it has never been addressed in the continuous-time domain, due to the difficulties in stability analysis under the time-varying nature and lack of connectedness of topologies.

In this paper, we investigate the distributed optimization problem via input feedforward passivity.
First, we propose an IFP-based distributed algorithm whose error system is decomposed into a group of individual IFP systems that interact with each other using output feedback information.
Based on this IFP framework, we study the distributed algorithm over directed and UJSC weight-balanced topologies and derive convergence conditions of a suitable coupling gain for the algorithm.
We also show that this IFP-based algorithm converges exponentially when the graph is strongly connected.
Second, we propose a novel distributed derivative feedback algorithm based on the passivation of IFP systems.
While most works on directed topologies in the literature require knowledge of eigenvalues of the graph Laplacian \cite{kia2015distributed,gharesifard2014distributed,deng2017distributed,zhu2018continuous}, we show that the derivative feedback algorithm is fully distributed, namely, it is robust against randomly changing weight-balanced digraphs with any positive coupling gain and without knowing any global information. In other words, the derivative feedback algorithm is applicable over gossip-like balanced digraphs \cite{zhang2018distributed}, reducing communication costs. It is worth mentioning that \cite{li2017distributed} develops a fully distributed adaptive algorithm. However, it does not apply to switching or UJSC topologies.
The challenges in our work lie in the construction of a group of verifiable nonlinear IFP systems that solves the distributed optimization problem, the design of the fully distributed algorithm, and the convergence analysis of the proposed algorithms.

Moreover, our analytical method differs from most works from the literature in that we first characterize passivity from single-agent level, and then address the stability based on the output feedback interconnection model of these agents over networks. This method compares favorably to some other works \cite{gharesifard2014distributed,kia2015distributed,deng2017distributed} since it bypasses coordinate transformation and preserves network structures in convergence analysis. Besides, it also allows potential applications of mature passivity-based techniques in the study of network issues arising in distributed optimization.

A preliminary version of this work appeared in \cite{li2019input}, where only the IFP-based algorithm has been proposed. In this work, we propose the IFP-based algorithm with a possibly time-varying coupling parameter, construct more practical conditions that are easier to verify in a distributed sense, and show exponential convergence of the IFP-based algorithm over strongly connected digraphs. Moreover, a fully distributed algorithm is proposed.

The rest of this paper is organized as follows. In \Cref{Section Preliminaries and Problem Formulation}, some background knowledge of convex analysis, graph theory, and passivity is reviewed and the problem formulation is given. In \Cref{Section IFP-Based Distributed Algorithm}, an IFP-based distributed algorithm is proposed and studied over weight-balanced UJSC topologies. In \Cref{Section Distributed Derivative Feedback Algorithm}, a fully distributed algorithm over weight-balanced UJSC digraphs is proposed.
In \Cref{Section Numerical Examples}, numerical examples are presented to illustrate effects of the two algorithms. Finally, the paper is concluded in \Cref{Section Conclusion}.

\section{Preliminaries and Problem Formulation}\label{Section Preliminaries and Problem Formulation}
\subsection{Notation}\label{Notation}
Let $\mathbb{R}$ and $\mathbb{Z}$ be the sets of real and integer numbers, respectively. The Kronecker product is denoted as $\otimes$. Let $\left\lVert \cdot \right\rVert$ denote the 2-norm of a vector and also the induced 2-norm of a matrix.
The determinant of a square matrix $M$ is denoted as $\text{det}(M)$.
Given a symmetric matrix $M \in \mathbb{R}^{m\times m}$, the notation $M>0$ $(M \geq 0)$ means that $M$ is positive definite (positive semi-definite).
Denote the eigenvalues of $M$ in ascending order as $s_1(M) \leq s_2(M) \leq \ldots \leq s_m(M)$.
Let $I$ and $\mathbf{0}$ denote the identity matrix and zero matrix (or vector) of proper sizes, respectively. $\mathbf{1}_m := (1,\ldots, 1)^T \in \mathbb{R}^{m}$ denotes the vector of $m$ ones.
$\text{col}(v_1,\ldots, v_m) := (v_1^T, \ldots, v_m^T)^T$ denotes the column vector stacked with vectors $v_1, \ldots, v_m$. The notation $\text{diag} \{ \alpha_1,\ldots,\alpha_m \}$ denotes a (block) diagonal matrix with its $i$th diagonal element (block) being $\alpha_i$.
The notation $\mathcal{C}^{k}$ is used to denote a $k \in \mathbb{Z}_{\geq 1}$ times continuously differentiable function.

\subsection{Convex Analysis}
A differentiable function $f: \mathbb{R}^{m} \rightarrow \mathbb{R}$ is \textit{convex} over a convex set $\mathcal{X} \subset \mathbb{R}^{m}$ if and only if $\left(\nabla f(x)-\nabla f(y)\right)^{T}(x-y)\geq 0$, $\forall x,~y \in \mathcal{X}$, and is \textit{strictly convex} if and only if the strict inequality holds for any $x \neq y$. It is \textit{$\mu$-strongly convex} if and only if $\left(\nabla f(x)-\nabla f(y)\right)^{T}(x-y)\geq \mu \|x-y\|^{2}$, $\forall x, y \in \mathcal{X}$. 
An equivalent condition for the strong convexity is the following:
$f(y) \geq f(x) + \nabla f(x)^T (y - x) + \frac{\mu}{2} \|y - x\|^2$, $\forall x, y \in \mathcal{X}$.
An operator $\mathbf{f}: \mathbb{R}^{m} \rightarrow \mathbb{R}^{m}$ is \textit{$l$-Lipschitz continuous} over a set $\mathcal{X} \in \mathbb{R}^{m}$ if $\| \mathbf{f}(x) - \mathbf{f}(y) \| \leq l \| x - y \|$, $\forall x,y\in \mathcal{X}$.

\subsection{Graph Theory}
The information exchanging network is represented by a graph $\mathcal{G} = (\mathcal{N},\mathcal{E})$, where $\mathcal{N} = \{1,\ldots,N\}$ is the node set of all agents, $\mathcal{E}\subset \mathcal{N}\times\mathcal{N}$ is the edge set. The edge $(j,i) \in \mathcal{E}$ means that agent $i$ can obtain information from agent $j$, and $j\in \mathcal{N}_{i}$, where $\mathcal{N}_{i} = \{i \in \mathcal{N} ~|~(j,i)\in \mathcal{E}\}$ is agent $i$'s neighbor set. We assume in this work that there are no self-loops in $\mathcal{G}$, i.e., $(i,i) \notin \mathcal{E}$ and $i \notin \mathcal{N}_{i}$.
The graph $\mathcal{G}$ is said to be \textit{undirected} if $(i,j) \in \mathcal{E} \Leftrightarrow (j,i) \in \mathcal{E}$ and \textit{directed} otherwise.
A sequence of successive edges $\{(i,p), (p, q),\ldots, (v, j)\}$ is a \textit{directed path} from agent $i$ to agent $j$.
$\mathcal{G}$ is said to be \textit{strongly connected} if there exists a directed path between any two agents. The adjacency matrix is defined as $\mathcal{A} = [a_{ij}]$, where $a_{ii} = 0$; $a_{ij} > 0$ if $(j,i) \in \mathcal{E}$, and $a_{ij} = 0$, otherwise. The in-degree and out-degree of the $i$th agent are $d_{in}^{i} = \sum_{j=1}^{N} a_{ij}$ and $d_{out}^{i} = \sum_{j=1}^{N} a_{ji}$, respectively. The graph $\mathcal{G}$ is said to be \textit{weight-balanced} if $d_{in}^{i} = d_{out}^{i},~\forall i \in \mathcal{N}$.
The in-degree matrix is $W_{in} = \text{diag}\{d_{in}^{1},\ldots, d_{in}^{N} \}$. The Laplacian matrix of $\mathcal{G}$ is defined as $L = W_{in} - \mathcal{A}$. When $\mathcal{G}$ is weight-balanced, it satisfies that $\mathbf{1}_{N}^T L = \mathbf{0}$ and $L \mathbf{1}_{N} = \mathbf{0}$.
A time-varying graph $\mathcal{G}(t)$ is said to be \textit{uniformly jointly strongly connected} (UJSC) if there exists a $T > 0$ such that for any $t_k$, the union $\cup_{t \in [t_k, t_k + T]} \mathcal{G}(t)$ is strongly connected.

\subsection{Passivity}
Consider a nonlinear dynamics described by
\begin{equation}\label{general nonlinear system}
\begin{cases}
\dot{x} = F \left(x, u \right)\\
y = H \left( x,u \right)
\end{cases}
\end{equation}
where $x \in \mathcal{X} \subset \mathbb{R}^n$, $u \in \mathcal{U} \subset \mathbb{R}^m$ and $y \in \mathcal{Y} \subset \mathbb{R}^m$ are the state, input and output, respectively, and $\mathcal{X}$, $\mathcal{U}$ and $\mathcal{Y}$ are the state, input and output spaces, respectively. The functions $F: \mathcal{X} \times \mathcal{U} \rightarrow \mathbb{R}^{n}$, $H: \mathcal{X} \times \mathcal{U} \rightarrow \mathcal{Y}$ represent system and output dynamics, respectively, and are assumed to be sufficiently smooth, i.e., $\mathcal{C}^{n}$ for large enough integer $n$.

Let us give the definition of passivity and input feedforward passivity for a nonlinear system based on \cite[Definition 6.3]{khalil1996noninear}, \cite[Definition 2.12]{sepulchre2012constructive}.

\begin{definition}
System \eqref{general nonlinear system} is said to be \textit{passive} from $u$ to $y$ if there exists a continuously differentiable positive semi-definite function $V(x)$, called the \textit{storage function}, such that
\begin{equation}\label{passivity definition inequality}
\dot{V} \leq u^T y, \quad \forall (x, u) \in \mathcal{X} \times \mathcal{U}.
\end{equation}
Moreover, it is said to be \textit{input feedforward passive} (IFP) if $\dot{V} \leq u^T y - \nu u^T u$, for some $\nu \in \mathbb{R}$, denoted as IFP($\nu$).
\end{definition}
The sign of the IFP index $\nu$ denotes an excess or shortage of passivity. Specifically, when $\nu > 0$, the system is said to be \textit{input strictly passive} (ISP). When $\nu < 0$, the system is said to be \textit{input feedforward passivity-short} (IFPS).
If we define a new output as $\tilde{y} = y - \nu u$, then the IFP system becomes passive from $u$ to $\tilde{y}$.
Throughout this paper, we consider the storage function to be positive definite and radially unbounded.

\subsection{Problem Formulation}\label{Subsection Problem Formulation}
Let us formulate the problem and give some necessary assumptions in this subsection.
Consider the distributed convex optimization problem among a group of agents in the node set $\mathcal{N} = \{1,\ldots,N\}$,
\begin{equation}\label{problem}
\begin{aligned}
\min_{\mathrm{x}}\sum_{i\in \mathcal{N}} f_i (\mathrm{x})
\end{aligned}
\end{equation}
where $\mathrm{x} \in \mathbb{R}^{m}$ and each local objective function $f_i: \mathbb{R}^{m} \rightarrow \mathbb{R}$ satisfies the following assumption.
\begin{assumption}\label{Assumption strongly convex}
Each $f_i(\mathrm{x})$ is $\mathcal{C}^{2}$ and $\mu_i$-strongly convex, with its gradient $\nabla f_i(\mathrm{x})$ being $l_i$-Lipschitz continuous.
\end{assumption}
This assumption also implies that $\Vert \nabla f_i(\mathrm{x}) - \nabla f_i(\mathrm{x}') \Vert \leq l_i \Vert \mathrm{x} - \mathrm{x}' \Vert$ and $\mu_i I \leq \nabla^2 f_i(\mathrm{x}) \leq l_i I$, $\forall \mathrm{x}, \mathrm{x}' \in \mathbb{R}^{m}$. Note that \Cref{Assumption strongly convex} is widely adopted in the literature, see, e.g., \cite{kia2015distributed, qu2019exponential}. It is required in this paper to ensure IFP properties and to estimate IFP indices of agents. In addition, it is shown later that the Lipschitz requirement can be relaxed by selecting proper parameters in the algorithms.

\vspace{2mm}
Under \Cref{Assumption strongly convex}, the necessary and sufficient condition of optimality for problem \eqref{problem} is \cite[Section 5.5.3]{boyd2004convex}
\begin{equation}\label{optimal solution}
\sum_{i \in \mathcal{N}} \nabla f_i (\mathrm{x}) = \mathbf{0}
\end{equation}
Denote $x_i \in \mathbb{R}^{m}$ as agent $i$'s local estimation of the global optimal solution and let $x = \text{col}(x_1,\ldots,x_N)$, then problem \eqref{problem} is equivalent to \cite{gharesifard2014distributed}
\begin{equation}\label{problem written in distributed form}
\begin{aligned}
& \min_{x} && f(x) = \sum_{i\in \mathcal{N}} f_i (x_i)\\
&\text{subject to} && x_i = x_j, \quad \forall i,j \in \mathcal{N}
\end{aligned}
\end{equation}
where the constraints are \textit{consensus constraints} for agents to reach a common value.
Under \Cref{Assumption strongly convex} and due to \eqref{optimal solution}, the optimal solution to problem \eqref{problem written in distributed form} should satisfy
\begin{equation}\label{KKT conditions}
\sum_{i \in \mathcal{N}} \nabla f_i (x_i) = \mathbf{0}, \quad
x_i = x_j, \quad \forall i,j \in \mathcal{N}.
\end{equation}

Consider the distributed optimization over UJSC weight-balanced digraphs. To the best of our knowledge, this problem has never been addressed in the continuous-time domain.

\begin{assumption}\label{Assumption switching graphs}
The agents interact with each other through a sequence of UJSC digraphs $\{ \mathcal{G}(t) \}$, where $\mathcal{G}(t)$ is weight-balanced pointwise in time and $L(t) \neq \mathbf{0}$, $\forall t \geq 0$.
\end{assumption}
This assumption does not restrict the switching logic of $\mathcal{G}(t)$ provided it is UJSC for a finite $T$. Note that the time interval $T$ is only imposed to ensure convergence performance, and our results in this work hold as long as $\mathcal{G}(t)$ is strongly connected in a probabilistic sense \cite{zhang2018distributed}. We will propose two algorithms in the following sections. The information of $L(t)$ is required for the first algorithm, while it is not used at all for the second algorithm. Here the trivial case of $L(t) = \mathbf{0}$ is omitted.

\section{IFP-Based Distributed Algorithm}\label{Section IFP-Based Distributed Algorithm}
In this section, we propose a distributed algorithm based on input feedforward passivity and study its stability over UJSC balanced topologies.
\subsection{IFP-Based Distributed Algorithm}
We propose an IFP-based distributed algorithm as follows.
\begin{algorithm}[H]
\caption{IFP-Based Distributed Algorithm}
\label{Algorithm distributed algorithm}
\begin{algorithmic}
\STATE
\begin{itemize}
\item \textbf{Initialization}:
\begin{enumerate}
\item Choose arbitrary constants $\alpha >0$, $\beta \in \mathbb{R}$, $\gamma > 0$.
\item Choose any $x_i(0) \in \mathbb{R}^{m}$, and $\lambda_i(0) \in \mathbb{R}^{m}$ such that $\sum_{i \in \mathcal{N}} \lambda_i(0) = \mathbf{0}$.
\item Evaluate $\nu_i, \forall i \in \mathcal{N}$ by \eqref{problem to get nu}.
\end{enumerate}
\item \textbf{Design}: Design $\sigma(t) > 0$ based on chosen parameters.
\item \textbf{Dynamics for agent $i$, $i \in \mathcal{N}$}:
\begin{subequations}\label{System distributed algorithm}
\begin{align}
\dot{x}_i &= -\alpha \nabla f_i(x_i) - \lambda_i + \beta u_i \label{eq: dynamics of x_i}\\
\dot{\lambda}_i &= -\gamma u_i \label{eq: dynamics of lambda_i}\\
u_i &= \sigma(t) \sum_{j\in \mathcal{N}_{i}(t)}a_{ij}(t)( x_j - x_i)\label{input local}
\end{align}
\end{subequations}
\end{itemize}
\end{algorithmic}
\end{algorithm}

$x_i, \lambda_i, \in \mathbb{R}^{m}$ and $u_i \in \mathbb{R}^{m}$ are local variables and input for the $i$th agent, respectively;
$\alpha > 0$, $\beta \in \mathbb{R}$ and $\gamma > 0$ are constant parameters and $\sigma(t) > 0$ is the coupling gain for the diffusive couplings \eqref{input local}.
To ease the discussion on parameters, we assume that $\alpha,\beta,\gamma$ are arbitrary parameters, while $\sigma(t)$ is a finite and possibly time-varying coupling gain to be designed.
The initial condition $\sum_{i\in \mathcal{N}} \lambda_i(0) = \mathbf{0}$ is required to ensure the optimality of the equilibrium point, which will be specified in subsequent analysis. A simple initial choice can be $\lambda_i(0) = \mathbf{0}$, $\forall i \in \mathcal{N}$.

Initially, each agent in \eqref{eq: dynamics of x_i} estimates the optimal value by local gradient descent. Since $f_i$, $\forall i \in \mathcal{N}$ may not be the same, an auxiliary variable $\lambda_i$ is introduced to compensate for the difference of local gradients and ensure the existence of an equilibrium. Then, a diffusive coupling protocol \eqref{input local} is added to \eqref{eq: dynamics of x_i}, \eqref{eq: dynamics of lambda_i} in order to drive the dynamics to reach a consensus on the final optimal value.
\Cref{Algorithm distributed algorithm} is a distributed algorithm since each agent only exchanges information with neighboring agents.

Denote $x = \text{col}(x_1,\ldots,x_N)$, $\lambda = \text{col}(\lambda_1,\ldots,\lambda_N)$. Agents in \Cref{Algorithm distributed algorithm} are interconnected through diffusive couplings $u_i$, $\forall i \in \mathcal{N}$. By eliminating $u_i$, the compact form of the overall closed-loop system is written as
\begin{subequations}\label{System Compact form of distributed algorithm}
\begin{align}
\dot{x} &= -\alpha \nabla f(x) - \lambda - \sigma(t) \beta \mathbf{L}(t)  x \label{dynamics x compact}\\
\dot{\lambda} &= \sigma(t) \gamma \mathbf{L}(t) x\label{dynamics lamb compact}
\end{align}
\end{subequations}
where $\mathbf{L}(t) = L(t) \otimes I_{m}$, and $L(t)$ is the graph Laplacian of $\mathcal{G}$.

\begin{remark}\label{Remark 2}
\Cref{Algorithm distributed algorithm} in the form of \eqref{System Compact form of distributed algorithm} is a generalization of algorithms developed in \cite{kia2015distributed}. Specifically, let $\sigma(t) = 1$ and $\gamma = \alpha\beta$, then \Cref{Algorithm distributed algorithm} reduces to the distributed algorithm in \cite{kia2015distributed}. When $\alpha = \gamma = \sigma(t) = 1$, and $\beta = 0$, \Cref{Algorithm distributed algorithm} reduces to the simplified algorithm in \cite{kia2015distributed}.
Compared with \cite{kia2015distributed},
\Cref{Algorithm distributed algorithm} includes more general cases whose convergence cannot be proved by methods in \cite{kia2015distributed}, e.g., when $\sigma(t)$ is time-varying, when $\beta$ is negative, and when $\gamma$ is independent of $\alpha, \beta$.
Moreover, it is shown later that this generalized algorithm is valid over UJSC topologies in addition to directed and strongly connected switching topologies \cite{kia2015distributed}, and has an exponential convergence rate when the graph is strongly connected.
\end{remark}

\begin{lemma}\label{Lemma unique optimal point}
Under Assumptions \ref{Assumption strongly convex} and \ref{Assumption switching graphs}. If there exists an equilibrium point $(x^*,\lambda^*)$ to system \eqref{System Compact form of distributed algorithm} that satisfies $\sum_{i \in \mathcal{N}} \lambda_i^* = \mathbf{0}$, where $x^* = \text{col}(x_1^*,\ldots,x_N^*)$, $\lambda^* = \text{col}(\lambda_1^*,\ldots,\lambda_N^*)$, then $(x^*,\lambda^*)$ is also unique with $x_i^*$ being the optimal solution to problem \eqref{problem}.
\end{lemma}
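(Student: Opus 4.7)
The plan is to set the derivatives in \eqref{System Compact form of distributed algorithm} to zero at the equilibrium $(x^*, \lambda^*)$ and read off the consensus and optimality conditions. First, I would examine \eqref{dynamics lamb compact}: since $\sigma(t) > 0$, $\gamma > 0$, and $\mathbf{J} = \mathrm{diag}\{J_i\}$ is invertible (being a block-diagonal with each $J_i$ invertible), the equilibrium condition $\dot\lambda = 0$ forces $\mathbf{L}(t)\mathbf{C}x^* = \mathbf{0}$ for every $t \geq 0$. Noting that $K_iJ_i = C^T$ with $K_i, J_i$ invertible implies $C$ is invertible, I then use \Cref{Assumption switching graphs}: on any interval $[t_k, t_k+T]$ the union graph is strongly connected and weight-balanced, so the kernel of the union (sum/integral) Laplacian is exactly $\mathrm{span}(\mathbf{1}_N)$. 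Integrating $\mathbf{L}(t)\mathbf{C}x^* = \mathbf{0}$ on this interval therefore gives $\mathbf{C}x^* \in \mathrm{span}(\mathbf{1}_N\otimes I_m)$, and invertibility of $C$ yields $x_i^* = \mathrm{x}^*$ for some common $\mathrm{x}^* \in \mathbb{R}^m$ and all $i \in \mathcal{N}$.

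Next, I would plug the relation $\mathbf{L}(t)\mathbf{C}x^* = \mathbf{0}$ into \eqref{dynamics x compact} at equilibrium, which collapses to
\begin{equation*}
\alpha \nabla f(x^*) + \mathbf{K}\lambda^* = \mathbf{0},
\end{equation*}
or componentwise $K_i \lambda_i^* = -\alpha \nabla f_i(\mathrm{x}^*)$. Summing over $i \in \mathcal{N}$ and invoking the hypothesis $\sum_{i\in\mathcal{N}} K_i \lambda_i^* = \mathbf{0}$ then delivers $\sum_{i\in\mathcal{N}} \nabla f_i(\mathrm{x}^*) = \mathbf{0}$, which is precisely the optimality condition in \eqref{KKT conditions} together with the already-established consensus $x_i^* = \mathrm{x}^*$. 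Thus $\mathrm{x}^*$ solves \eqref{problem}.

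For uniqueness, I would appeal to \Cref{Assumption strongly convex}: each $f_i$ is $\mu_i$-strongly convex, so $\sum_i f_i$ is strongly convex, and hence the minimizer $\mathrm{x}^*$ of \eqref{problem} is unique. This pins down $x^*$ uniquely, and then the invertibility of each $K_i$ in $K_i\lambda_i^* = -\alpha \nabla f_i(\mathrm{x}^*)$ gives $\lambda_i^* = -\alpha K_i^{-1}\nabla f_i(\mathrm{x}^*)$, which is unique as well.

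The main obstacle I expect is Step 1's use of the switching topology: one needs to justify carefully that an equilibrium point of the \emph{time-varying} closed-loop system must satisfy $\mathbf{L}(t)\mathbf{C}x^* = \mathbf{0}$ simultaneously for all $t$ (not merely at a single instant), and then pass from this pointwise-in-time nullspace condition, through the UJSC property of \Cref{Assumption switching graphs}, to the consensus statement. Everything else (invertibility of $C$, recovery of the KKT condition, and uniqueness from strong convexity) is routine.
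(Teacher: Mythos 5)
Your proposal is correct and follows essentially the same route as the paper's proof: read off $\mathbf{L}(t)\mathbf{C}x^*=\mathbf{0}$ from $\dot\lambda^*\equiv\mathbf{0}$, use the UJSC property and invertibility of $C$ (from $K_iJ_i=C^T$) to get consensus, left-multiply the $x$-equation by $(\mathbf{1}_N\otimes I_m)^T$ together with the hypothesis $\sum_i K_i\lambda_i^*=\mathbf{0}$ to recover the optimality condition \eqref{KKT conditions}, and conclude uniqueness from strong convexity and invertibility of $\mathbf{K}$. Your integration-over-$[t_k,t_k+T]$ argument merely spells out the step the paper states more tersely ("since the graph is UJSC, $\dot\lambda^*\equiv\mathbf{0}$ implies $Cx_i^*=Cx_j^*$"), so no substantive difference.
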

\begin{proof}
The equilibrium point $(x^*,\lambda^*)$ satisfies
\begin{subequations}\label{System optimal point}
\begin{align}
\dot{x}^* &= -\alpha \nabla f(x^*) - \lambda^* \equiv \mathbf{0}\label{equilibrium x star}\\
\dot{\lambda}^* &= \sigma(t) \gamma \mathbf{L}(t) x^* \equiv \mathbf{0}\label{equilibrium lambda star}
\end{align}
\end{subequations}
where the term $- \sigma(t) \beta \mathbf{L}(t) x^*$ in \eqref{equilibrium x star} is zero and omitted since \eqref{equilibrium lambda star} implies $\mathbf{L}(t) x^* \equiv \mathbf{0}$. Since the graph is UJSC, $\mathbf{L}(t) x^* \equiv \mathbf{0}$ for all $t$ implies that $ x_i^* =  x_j^*,~\forall i, j\in \mathcal{N}$.
Next, multiplying \eqref{equilibrium x star} by $(\mathbf{1}_{N} \otimes I_m)^T$ from the left, one has,
\begin{align*}
& -(\mathbf{1}_{N} \otimes I_m)^T \alpha \nabla f(x^*) - (\mathbf{1}_{N} \otimes I_m)^T \lambda^*\\
=& -\sum_{i\in\mathcal{N}} \alpha \nabla f_i (x_i^*) - \sum_{i \in \mathcal{N}} \lambda_i^*\\
=& -\alpha \sum_{i\in\mathcal{N}} \nabla f_i (x_i^*) \equiv \mathbf{0}
\end{align*}
which satisfies \eqref{KKT conditions}. Therefore, $x_i^*$ is the optimal solution to problem \eqref{problem}.
Besides, the strong convexity of $f(x)$ in \Cref{Assumption strongly convex} implies that $x^*$ is unique \cite[Section 9.1.2]{boyd2004convex}. Thus, by \eqref{equilibrium x star}, $\lambda^*$ is unique as well.
\end{proof}
Hereafter, we call $(x^*,\lambda^*)$ the \textbf{optimal point}. The convergence of \Cref{Algorithm distributed algorithm} will be addressed in \Cref{Subsection Algorithm 1 Over UJSC Balanced Topologies}.

\subsection{Input Feedforward Passivity of the Error System}

In this subsection, we show that the error subsystem of each agent inherits the input feedforward passivity, which is a crucial step before the convergence analysis over UJSC balanced digraphs, and the design of a passivated algorithm in \Cref{Section Distributed Derivative Feedback Algorithm}.

By \Cref{Lemma unique optimal point}, for agent $i$, one has
\begin{subequations}\label{System optimal point for each agent}
\begin{align}
\dot{x}_i^* &= -\alpha \nabla f_i(x_i^*) - \lambda_i^* \equiv \mathbf{0}\\
\dot{\lambda}_i^* & \equiv \mathbf{0}.
\end{align}
\end{subequations}
Denote $\Delta x_i = x_i-x_i^*$, $\Delta \lambda_i = \lambda_i - \lambda_i^*$.
Then, the group of error subsystems between \eqref{System distributed algorithm} and \eqref{System optimal point for each agent} is
\begin{equation}\label{subsystems}
\begin{aligned}
&\Sigma_i, ~\forall i \in \mathcal{N}:\\
& \begin{cases}
\Delta \dot{x}_i & = -\alpha \left( \nabla f_i(x_i)- \nabla f_i(x_i^*)\right) - \Delta\lambda_i + \beta u_i\\
\Delta \dot{\lambda}_i & = - \gamma u_i\\
y_i & = \Delta x_i
\end{cases}
\end{aligned}
\end{equation}
where $y_i$ is defined as the output of the $i$th error subsystem. Then the input $u_i$, $\forall i \in \mathcal{N}$ can be rewritten as
\begin{equation}\label{diffusive couplings}
u_i = \sigma(t) \sum_{j\in \mathcal{N}_{i}}a_{ij}(t)(y_j - y_i),\quad \forall i \in \mathcal{N}
\end{equation}
or $u = - \sigma(t) \mathbf{L}(t)y$ in a compact form, where $u = \text{col}(u_1, \ldots, u_N)$, $y = \text{col}(y_1,\ldots, y_N)$.
Assume that, corresponding to the real agents, there exists a group of virtual agents such that the $i$th virtual agent possesses the subsystem $\Sigma_i$. Then, \Cref{Algorithm distributed algorithm} can be seen as output feedback interconnections of these virtual agents. In fact, no information of $(x_i^*, \lambda_i^*)$ is needed for communication since $y_i- y_j = \Delta x_i - \Delta x_j = x_i - x_j$. Then, each agent possesses the same information as its corresponding virtual agent.

Next, we show that each error subsystem $\Sigma_i$ in \eqref{subsystems} is IFP($\nu_i$) with index $\nu_i \leq 0$.

\begin{lemma}\label{Lemma nonlinear IFP}
Under \Cref{Assumption strongly convex}, each error subsystem $\Sigma_i$ in \eqref{subsystems} is IFP($\nu_i \leq 0$) from input $u_i$ to output $y_i$ with respect to the storage function
\[
\begin{array}{ll}
V_i = & \frac{\eta_i}{2} \|z_i\|^2 - \frac{1}{\gamma} \Delta x_i^T \Delta \lambda_i + \frac{\alpha}{\gamma}\left( f_i(x_i^*) - f_i(x_i) \right)\\
& + \frac{\alpha}{\gamma} \nabla f_i(x_i^*)^T \Delta x_i \numberthis \label{Storage function}
\end{array}
\]
where $\eta_i > \frac{1}{\mu_i \alpha \gamma}$ and $z_i = \alpha \left( \nabla f_i(x_i)- \nabla f_i(x_i^*)\right) + \Delta\lambda_i$.
\end{lemma}

\begin{proof}
See the Appendix.
\end{proof}

As pointed out by \cite{li2019consensus}, it is in general difficult to derive the exact IFP index for a nonlinear
system, and only its lower bound can be obtained by specifying
the storage function. With the storage function \eqref{Storage function}, the lower bound of the exact IFP index can be obtained locally by solving the minimax problem
\begin{equation}\label{problem to get nu}
\nu_i = -\min_{\eta_i} \max_{x_i} \frac{\left\lVert \eta_i \left(\alpha \beta \nabla^2 f_i(x_i) - \gamma I \right) - \frac{\beta}{\gamma}I \right\rVert^2 }{4\left( \mu_i \eta_i \alpha - \frac{1}{\gamma}\right)}.
\end{equation}
When each $f_i$ is quadratic, $\forall i \in \mathcal{N}$, the error system \eqref{subsystems} becomes a linear system. The exact IFP index for a linear system can be easily obtained by solving an LMI related to the positive real lemma \cite[Lemma 2]{kottenstette2014relationships}.
The problem of reducing this gap between the lower bound and the exact index of IFP remains open and is left for the future work.


\begin{remark}
It is in general not difficult to obtain $\nu_i$ by solving \eqref{problem to get nu} since local objective functions are usually of simple forms. Even when the local objective functions are complicated, problem \eqref{problem to get nu} can be relaxed to 
\begin{align*}
\nu_i \geq & -\min_{\eta_i} \max_{x_i} \frac{ \left( \left\| \eta_i \alpha \beta \nabla^2 f_i(x_i) \right\| + \left\| \eta_i \gamma I \right\| + \left\| \frac{\beta}{\gamma}I \right\| \right)^2 }{4\left( \mu_i \eta_i \alpha - \frac{1}{\gamma}\right)}\\
\geq & - \min_{\eta_i} \frac{ \left( \left( \eta_i \alpha l_i + \frac{1}{\gamma}\right) | \beta | + \eta_i \gamma \right)^2 }{4\left( \mu_i \eta_i \alpha - \frac{1}{\gamma}\right)} \numberthis \label{lower bound of nu}
\end{align*}
where $l_i$ is the Lipschitz index defined in \Cref{Assumption strongly convex}. Here \eqref{lower bound of nu} can be easily solved, providing a lower bound of the exact IFP index, which we can denote as the new $\nu_i$.
It can also be observed that when $\beta = 0$, \eqref{problem to get nu} reduces to $\nu_i = -\min_{\eta_i} \frac{\eta_i^2 \gamma^2}{4\left( \mu_i \eta_i \alpha - \frac{1}{\gamma}\right)} = - \frac{\gamma}{\mu_i^2 \alpha^2}$. The IFP index of agent $i$ is only related to the strong convexity index $\mu_i$. In this case, the Lipschitz continuity of the gradients is not required.
\end{remark}

\subsection{Algorithm Over UJSC Balanced Topologies}\label{Subsection Algorithm 1 Over UJSC Balanced Topologies}
In this subsection, we analyze the convergence of \Cref{Algorithm distributed algorithm} over weight-balanced and UJSC switching topologies based on output feedback interconnections of subsystems $\Sigma_i$ in \eqref{subsystems}. Meanwhile, the effort in constructing candidate Lyapunov functions in convergence analysis is greatly reduced.

\begin{definition}\label{Definition output consensus}
The group of agents $\Sigma_i$, $\forall i \in \mathcal{N}$ is said to achieve output consensus if their outputs satisfy $\lim_{t\rightarrow \infty} \left\lVert y_i(t) - y_j(t)\right\rVert = 0, ~\forall i,j \in \mathcal{N}$.
\end{definition}

\begin{theorem}[\hspace{1sp}\cite{li2019input}]\label{Theorem switching graphs}
Under Assumptions \ref{Assumption strongly convex} and \ref{Assumption switching graphs}, the states of \Cref{Algorithm distributed algorithm} will converge to the optimal point and solve problem \eqref{problem} if $\sum_{i\in \mathcal{N}} \lambda_i(0) = \mathbf{0}$ and the coupling gain $\sigma(t)$ satisfies 
\begin{equation}\label{coupling gain switching graphs}
0 < \sigma(t) < \frac{s_{+} \left( L(t) + L^T(t) \right)}{- 2 \bar{\nu} s_{N}\left( L^T(t) L(t) \right)},~\forall t > 0
\end{equation}
where $\bar{\nu} < 0$ is the smallest value of IFP index $\nu_i,~i \in \mathcal{N}$, $s_{+}(\cdot)$ denotes the nonzero smallest eigenvalue, and $s_{N}(\cdot)$ was defined in \Cref{Notation}.
\end{theorem}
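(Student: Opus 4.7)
The plan is to build a global Lyapunov function out of the per-agent storage functions from \Cref{Lemma nonlinear IFP} and to exploit the output feedback interconnection $u = -\sigma(t)\mathbf{L}(t)y$ arising in \eqref{diffusive couplings}. Set $V = \sum_{i \in \mathcal{N}} V_i$, which is positive definite with its unique zero at $(x^{*},\lambda^{*})$. Summing the IFP dissipation inequalities $\dot V_i \leq y_i^T u_i - \nu_i u_i^T u_i$ over $i$, together with $\nu_i \geq \bar\nu$, yields $\dot V \leq y^T u - \bar\nu u^T u$. Substituting $u = -\sigma(t)\mathbf{L}(t)y$ and using $y^T\mathbf{L}(t)y = \tfrac{1}{2} y^T(\mathbf{L}(t)+\mathbf{L}^T(t))y$ gives
\begin{equation*}
\dot V \leq -y^T M(t) y, \; M(t) := \tfrac{\sigma(t)}{2}(\mathbf{L}(t)+\mathbf{L}^T(t)) + \bar\nu\sigma^2(t)\mathbf{L}^T(t)\mathbf{L}(t).
\end{equation*}

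Next I would show $M(t) \geq 0$ under \eqref{coupling gain switching graphs}. Because $\mathcal{G}(t)$ is weight-balanced, $\mathbf{L}(t)+\mathbf{L}^T(t)$ is positive semidefinite and a short computation (using $v^T L v = \tfrac12 v^T(L+L^T)v$ together with PSDness of $L+L^T$) shows that the three kernels $\ker(\mathbf{L}(t)+\mathbf{L}^T(t))$, $\ker\mathbf{L}(t)$ and $\ker\mathbf{L}^T(t)\mathbf{L}(t)$ coincide; call this common kernel $\mathcal{K}(t)$. Decomposing $y = y_\parallel + y_\perp$ with $y_\parallel \in \mathcal{K}(t)$, one has $M(t)y_\parallel = 0$, while on $y_\perp$ the Rayleigh bounds give $y_\perp^T(\mathbf{L}(t)+\mathbf{L}^T(t))y_\perp \geq s_{+}(L(t)+L^T(t))\|y_\perp\|^2$ and $y_\perp^T\mathbf{L}^T(t)\mathbf{L}(t)y_\perp \leq s_{N}(L^T(t)L(t))\|y_\perp\|^2$. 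Since $\bar\nu<0$, \eqref{coupling gain switching graphs} is precisely what makes $y_\perp^T M(t) y_\perp \geq 0$, so $\dot V \leq 0$; hence $V$ is nonincreasing, the trajectory $(x(t),\lambda(t))$ is bounded, and integrating gives $\int_0^\infty y^T M(s) y\,ds \leq V(0) < \infty$.

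To conclude convergence, I would combine this integrability with the UJSC hypothesis and the invariance $\sum_i K_i \lambda_i(t) \equiv 0$, which itself follows from $\frac{d}{dt}\sum_i K_i\lambda_i = -\gamma C^T\sum_i u_i \equiv 0$ (using $K_iJ_i = C^T$ and $\sum_i u_i = 0$ by balancedness of $\mathcal{G}(t)$). Over every window $[t_k,t_k+T]$ the union of graphs is strongly connected, so the aggregated quadratic form dominates $\|y_\perp\|^2$ on the complement of the true consensus subspace $\{\mathbf{1}_N \otimes v\}$; a Barbalat-type argument, using boundedness of $y$, $\dot y$ and $\sigma(t)$, then upgrades the integrability to $y_i(t) - y_j(t) \to 0$ for all $i,j$. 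Invertibility of $C$ gives $x_i \to x_\infty$, and passing to the limit in \eqref{dynamics x compact} with $u \to 0$ together with the invariant yields $\sum_i \nabla f_i(x_\infty) = 0$; by \Cref{Lemma unique optimal point}, $x_\infty = x^{*}$ and then $K_i\lambda_i \to -\alpha\nabla f_i(x^{*}) = K_i\lambda_i^{*}$ gives $\lambda_i \to \lambda_i^{*}$.

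The main obstacle is the UJSC switching itself: at individual instants $\mathcal{G}(t)$ can be disconnected so $M(t)$ is rank-deficient on directions that are not true consensus, and the pointwise dissipation must be patched across successive windows of length $T$ to obtain a uniform lower bound on the per-window decrease of $V$. The Barbalat step also requires care, since $\sigma(t)$ and $L(t)$ are both time-varying; one must verify the requisite uniform continuity and argue that $\dot x \to 0$ so that the optimality condition can be read off in the limit.
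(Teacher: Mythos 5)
Your proposal is correct and follows essentially the same route as the paper: the paper establishes this theorem with precisely the Lyapunov function $V=\sum_{i\in\mathcal{N}}V_i$, the summed IFP inequalities with $u=-\sigma(t)\mathbf{L}(t)y$, and the key fact that $L(t)+L^T(t)$ and $L^T(t)L(t)$ share the same null space, then concludes via the invariance of $\sum_{i}K_i\lambda_i(t)$, a nonautonomous LaSalle-type argument under the UJSC hypothesis, and \Cref{Lemma unique optimal point}. The only differences are presentational --- the paper relegates the Rayleigh-quotient computation to its conference version and cites a LaSalle invariance principle where you propose a Barbalat-style patching over windows --- so no further changes are needed.
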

It can be proved through the Lyapunov function $V = \sum_{i \in \mathcal{N}} V_i$, where $V_i$ was defined in \eqref{Storage function}, and by the fact that $L(t) + L^T(t)$ and $L^T(t)L(t)$ have the same null space.
The details of the proof with constant $\sigma$ can be found in the conference paper \cite{li2019input}. Condition \eqref{coupling gain switching graphs} requires the calculation of eigenvalues, which may be difficult to verify in a large-scale network. Thus, a more practical condition is derived in a different manner as follows, which is easier to verify or estimate for the design of the coupling gain in a distributed sense.
\begin{theorem}\label{Theorem switching graphs 2}
Under Assumptions \ref{Assumption strongly convex} and \ref{Assumption switching graphs}, the states of \Cref{Algorithm distributed algorithm} with initial condition $\sum_{i\in \mathcal{N}} \lambda_i(0) = \mathbf{0}$ will converge to the optimal point and solve problem \eqref{problem} if the positive coupling gain $\sigma(t)$ satisfies \begin{equation}\label{coupling gain switching graphs 2}
\frac{1}{2} - \sigma(t) |\nu_i| d^{i}(t) > 0, ~\forall i \in \mathcal{N}
\end{equation}
where $d^{i}(t)$ is the in/out-degree of the $i$th agent.
\end{theorem}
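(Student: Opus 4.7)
The plan is to run the same Lyapunov-based argument used in Theorem \ref{Theorem switching graphs}, but to bound the ``shortage'' term coming from IFP($\nu_i$) in a purely \emph{local} way, using Cauchy--Schwarz together with the balanced property, so that degrees $d^i(t)$ appear instead of eigenvalues of $L(t)$. Concretely, I would take the Lyapunov candidate $V=\sum_{i\in\mathcal N}V_i$ with $V_i$ as in \eqref{Storage function}, so $V>0$ and $V=0$ only at the optimal point by \Cref{Lemma unique optimal point}. Summing \eqref{Derivative of V_i} over $i$ and using $u=-\sigma(t)\mathbf L(t)y$, I get the compact bound
\begin{equation*}
\dot V \;\le\; y^T u - \sum_{i\in\mathcal N}\nu_i u_i^T u_i
\;=\; -\sigma(t)\, y^T\mathbf L(t) y + \sigma^2(t)\,y^T\mathbf L^T(t)(-\mathbf N)\mathbf L(t) y,
\end{equation*}
where $\mathbf N=\operatorname{diag}\{\nu_i I_m\}\le 0$ by \Cref{Lemma nonlinear IFP}.

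The core step is to bound the second, indefinite term by a multiple of the first. For the weight-balanced Laplacian $L(t)$, the standard identity gives $y^T\mathbf L(t)y=\tfrac12\sum_{i,j}a_{ij}(t)\|y_i-y_j\|^2$. For the shortage term, I would apply Cauchy--Schwarz row-wise: for each $i$,
\begin{equation*}
\bigl\|(\mathbf L(t) y)_i\bigr\|^2
=\Bigl\|\sum_{j\in\mathcal N_i(t)} a_{ij}(t)(y_i-y_j)\Bigr\|^2
\le d^i(t)\sum_{j\in\mathcal N_i(t)}a_{ij}(t)\|y_i-y_j\|^2,
\end{equation*}
using that $\sum_j a_{ij}(t)=d^i(t)$ on a balanced graph. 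Multiplying by $|\nu_i|$ and summing over $i$ yields
\begin{equation*}
y^T\mathbf L^T(t)(-\mathbf N)\mathbf L(t)y
\le \max_i\{|\nu_i|d^i(t)\}\sum_{i,j}a_{ij}(t)\|y_i-y_j\|^2
= 2\max_i\{|\nu_i|d^i(t)\}\, y^T\mathbf L(t)y.
\end{equation*}
Combining the two bounds gives $\dot V \le -\sigma(t)\bigl[1-2\sigma(t)\max_i\{|\nu_i|d^i(t)\}\bigr]y^T\mathbf L(t)y$, which is nonpositive precisely under \eqref{coupling gain switching graphs 2}, and in fact strictly negative whenever $y$ is not in $\ker\mathbf L(t)$.

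From $\dot V\le 0$ the rest of the argument follows the template of \Cref{Theorem switching graphs}: $V$ is nonincreasing, so $\Delta x,\Delta\lambda$ are bounded, and integrating $\dot V$ yields $\int_0^\infty \sigma(t)[1-2\sigma(t)\max_i|\nu_i|d^i(t)]\,y^T\mathbf L(t)y\,dt<\infty$; invoking the UJSC property in \Cref{Assumption switching graphs} together with a Barb\u alat/persistent-excitation argument on the joint union over windows of length $T$ then gives output consensus $Cx_i\to Cx_j$, and the conserved quantity $\sum_i K_i\lambda_i\equiv\mathbf 0$ combined with \Cref{Lemma unique optimal point} pins down $(x^*,\lambda^*)$. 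The main obstacle, in my view, is not the algebra of the bound above but the last step: making the UJSC-plus-time-varying-$\sigma(t)$ argument rigorous, since one must show that the integrated dissipation controls the disagreement across \emph{unions} of graphs on every interval $[t_k,t_k+T]$, not just on each $\mathcal G(t)$ pointwise. I would handle this exactly as in \cite{li2019input}, by absorbing the bracketed factor into a uniform positive lower bound (which follows because \eqref{coupling gain switching graphs 2} holds with a strict inequality and $d^i(t)$ takes values in a finite set under a standard bounded-weights assumption), and then applying the joint-connectivity argument on $y^T\mathbf L(t)y$ over each window.
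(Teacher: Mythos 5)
Your proposal is correct and follows essentially the same route as the paper's proof: the same Lyapunov candidate $V=\sum_i V_i$, the same use of weight-balancedness to reduce $y^Tu$ to the symmetric disagreement sum, and the same row-wise Cauchy--Schwarz bound $\|u_i\|^2\le \sigma^2 d^i(t)\sum_j a_{ij}(t)\|y_i-y_j\|^2$ that turns the IFP shortage term into a degree-weighted multiple of the dissipation, yielding the threshold \eqref{coupling gain switching graphs 2}. The only cosmetic differences are that you take the max over $i$ where the paper keeps per-agent coefficients $\tfrac12-\sigma(t)|\nu_i|d^i(t)$, and that you invoke a Barb\u{a}lat-type argument where the paper cites LaSalle's invariance principle for nonautonomous systems before concluding via the conservation of $\sum_i K_i\lambda_i$ and \Cref{Lemma unique optimal point}.
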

\begin{proof}
Let $V = \sum_{i \in \mathcal{N}} V_i$, where $V_i$ was defined in \eqref{Storage function}. By the proof of \Cref{Lemma nonlinear IFP}, $\left\lVert
\begin{smallmatrix}
\Delta x \\ 
\Delta \lambda
\end{smallmatrix} \right\rVert \rightarrow \infty \Rightarrow V \rightarrow \infty$, thus, $V$ is radially unbounded. Suppose \eqref{coupling gain switching graphs 2} holds. Then, following \Cref{Lemma nonlinear IFP}, the derivative of $V$ gives
\begin{align*}
\dot{V} \leq & \sum_{i \in \mathcal{N}} y_i^T u_i - \nu_i u_i^T u_i \\
= & \sigma(t) \sum_{i \in \mathcal{N}} \sum_{j\in \mathcal{N}_i(t)} a_{ij}(t) y_i^T (y_j - y_i) - \nu_i u_i^T u_i\\
= & - \frac{\sigma(t)}{2} \sum_{i \in \mathcal{N}} \sum_{j\in \mathcal{N}_i(t)} a_{ij}(t) \left( y_i^T y_i - 2 y_i^T y_j + y_j^T y_j \right)\\
& - \frac{\sigma(t)}{2} \sum_{i \in \mathcal{N}} \sum_{j\in \mathcal{N}_i(t)} a_{ij}(t) \left( y_i^T y_i - y_j^T y_j \right)- \nu_i u_i^T u_i\\
= & - \frac{\sigma(t)}{2} \sum_{i \in \mathcal{N}} \sum_{j\in \mathcal{N}_i(t)} a_{ij}(t) \left\lVert y_j - y_i \right\rVert^2 \\
& - \frac{\sigma(t)}{2} \left(\mathbf{1}^T_{N} \otimes I_{m}\right) \mathbf{L}(t) \left( Y^T Y \right)\\
& - \sum_{i \in \mathcal{N}} \nu_i \left\lVert \sigma(t) \sum_{j\in \mathcal{N}_i(t)} a_{ij}(t) \left( y_j - y_i \right) \right\rVert^2\\
= & - \frac{\sigma(t)}{2} \sum_{i \in \mathcal{N}} \sum_{j\in \mathcal{N}_i(t)} a_{ij}(t) \left\lVert y_j - y_i \right\rVert^2 \\
& - \sigma^2(t)  \sum_{i \in \mathcal{N}}\nu_i \left\lVert \sum_{j\in \mathcal{N}_i(t)} a_{ij}^{\frac{1}{2}}(t) \cdot a_{ij}^{\frac{1}{2}}(t) \left( y_j - y_i \right) \right\rVert^2\\
\leq & - \frac{\sigma(t)}{2} \sum_{i \in \mathcal{N}} \sum_{j\in \mathcal{N}_i(t)} a_{ij}(t) \left\lVert y_j - y_i \right\rVert^2 \\
& - \sigma^2(t) \sum_{i \in \mathcal{N}}\nu_i \left( \sum_{j\in \mathcal{N}_i(t)} a_{ij}(t) \right) \sum_{j\in \mathcal{N}_i(t)} a_{ij}(t) \left\lVert y_j - y_i \right\rVert^2\\
= & - \sigma(t) \sum_{i \in \mathcal{N}} \left( \frac{1}{2} - \sigma(t) |\nu_i| d^{i}(t) \right) \sum_{j\in \mathcal{N}_i(t)} a_{ij}(t) \left\lVert y_j - y_i \right\rVert^2\\
\leq & 0 \numberthis\label{Derivative of V}
\end{align*}
where $Y^T Y := col\left( y_1^T y_1, \ldots, y_{N}^T y_{N}\right)$, the third equality follows from \eqref{diffusive couplings}, the fourth equality follows from $\left(\mathbf{1}^T_{N} \otimes I_{m}\right) \mathbf{L}(t) = \left(\mathbf{1}^T_{N} L(t) \right)\otimes I_{m} = \mathbf{0}$, the second inequality follows from the Cauchy-Schwarz inequality, and the last inequality follows from \eqref{coupling gain switching graphs 2}.

Then $\lim_{t \rightarrow + \infty} V(t)$ exists and is finite. $\dot{V} \leq 0$ implies that the states $\Delta x$, $\Delta \lambda$ are bounded.
The systems trajectories are bounded within the domain $\mathcal{S}_0 = \{\left(\Delta x,\Delta \lambda \right)~|~ V(t) \leq V(0)\}$.
By the first term in $\dot{V}$ (see \eqref{Derivative of V_i} in the appendix) and the jointly connectedness of $\mathcal{G}(t)$, $\dot{V} \equiv 0$ only if $z_i =\mathbf{0}$ and $y_i = y_j$, $\forall i, j \in \mathcal{N}$, where $z_i$ was defined in \Cref{Lemma nonlinear IFP}. Define the domain $\mathcal{S}_{z} := \left\{ \left(\Delta x,\Delta \lambda \right) ~|~ z_i=\mathbf{0}, y_i= y_j, \forall i,j\in \mathcal{N} \right\}$.
Clearly, $\left\|\begin{smallmatrix} \Delta \dot{x}\\ \Delta \dot{\lambda}\end{smallmatrix} \right\|$ is bounded for any bounded $\Delta x$, $\Delta \lambda$.
Invoking the LaSalle's Invariance Principle for nonautonomous systems \cite{barkana2014defending}, we conclude that the system states ultimately reach the domain $\mathcal{S}_0 \cap \mathcal{S}_{z}$.
Then output consensus is achieved by \Cref{Definition output consensus}. Recalling \eqref{diffusive couplings}, one has $u = \mathbf{0}$ when output consensus is achieved.
Therefore, $\Delta \dot{x} \rightarrow \mathbf{0}$, $\Delta \dot{\lambda} \rightarrow \mathbf{0}$, or equivalently, $\dot{x} \rightarrow \mathbf{0}$, $\dot{\lambda} \rightarrow \mathbf{0}$ as $t \rightarrow \infty$, i.e., the states of \eqref{System Compact form of distributed algorithm} asymptotically converge to an equilibrium point.

Since $\lambda - \lambda(0) = \int_{0}^{t} \dot{\lambda}(\tau) d\tau$, given the initial condition $\sum_{i\in \mathcal{N}} \lambda_i(0) = \mathbf{0}$, 
\[ 
\begin{array}{ll}
&(\mathbf{1}_{N} \otimes I_m)^T \lambda\\
=& (\mathbf{1}_{N} \otimes I_m)^T \left( \displaystyle \int_{0}^{t} \sigma(\tau)\gamma \mathbf{L}(\tau) x(\tau) d\tau + \lambda(0)\right)\\
=& \gamma \displaystyle \int_{0}^{t}\sigma(\tau) (\mathbf{1}_{N} \otimes I_m)^T (L(\tau) \otimes I_{m}) x(\tau) d\tau\\
& + \displaystyle \sum_{i \in \mathcal{N}} \lambda_i(0)\\
= & \gamma \displaystyle \int_{0}^{t}\sigma(\tau) (\mathbf{1}_{N}^T L(\tau) \otimes I_{m}) x(\tau) d\tau
= \mathbf{0} \numberthis \label{eq: sum of lambda equals zero}
\end{array}
\]
where the third equality follows from the Kronecker product and the last follows from $\mathbf{1}_{N}^T L(\tau) = \mathbf{0}$. Then \Cref{Lemma unique optimal point} holds, implying that the equilibrium point is the unique optimal point. Consequently, the states of \Cref{Algorithm distributed algorithm} will asymptotically converge to the optimal point.
\end{proof}

The case of fixed directed topologies can be seen as a special case of switching topologies, then the convergence is also guaranteed.
Readers can refer to our conference paper \cite{li2019input} for more technical details.
In addition, an exponential convergence rate can be obtained when the graph is strongly connected, as stated in the following.
\begin{theorem}\label{Theorem exponentially convergence}
Suppose that the conditions in \Cref{Theorem switching graphs 2} hold. In addition, if the communication digraph $\mathcal{G}$ is fixed, strongly connected and weight-balanced, and the coupling gain $\sigma(t) \geq \underline{\sigma} > 0$ for a constant $\underline{\sigma}$, then the states of \Cref{Algorithm distributed algorithm} with initial condition $\sum_{i\in \mathcal{N}} \lambda_i(0) = \mathbf{0}$ will exponentially converge to the optimal point.
\end{theorem}
\begin{proof}
See the Appendix.
\end{proof}
It can be observed from the proof that the exponential convergence also holds over time-varying strongly connected weight-balanced graphs provided $ 0< \underline{d} \leq  d^i(t) \leq \bar{d}$, $\forall i \in \mathcal{N}$ for some constants $\underline{d}$, $\bar{d}$.

\begin{remark}
Note that only weight-balanced graphs are considered here. The consensus over unbalanced graphs can be guaranteed similarly \cite{qu2014modularized, li2019consensus} with $V = \sum_{i \in \mathcal{N}} \xi_i V_i$, where $\xi_i > 0$ is the $i$th element of the left eigenvalue of $L$. However, the sum of local objective functions will have a shift from the global optimum \cite{xie2018distributed}. Thus, some modification is needed. This problem may be solved by adding a state to estimate the left eigenvalues of $L$ (e.g., \cite{zhu2018continuous}), which we will leave to future work.
\end{remark}
\subsection{Discussion on the coupling gain}\label{Subsection Discussion on the coupling gain}
In this subsection, we proceed to discuss the parameters and the design of the coupling gain $\sigma(t)$ for \Cref{Algorithm distributed algorithm}.

By \Cref{Lemma nonlinear IFP}, the subsystem $\Sigma_i$ is IFP regardless of values of $\alpha,\beta,\gamma$. Let $\sigma_{e} = \frac{1}{ 2 \max_{i}\{ d^{i}(t) |\nu_i|\}}$ be the threshold of $\sigma(t)$ when $\max_{i}\{ d^{i}(t) |\nu_i|\} \neq 0$. Clearly, $\sigma_{e} > \sigma(t) > 0$ by \Cref{Theorem switching graphs 2}, meaning that there always exists a small enough $\sigma(t)$ to synchronize the outputs. Thus, $\alpha,\beta,\gamma$ can be arbitrarily chosen within the range specified in \Cref{Algorithm distributed algorithm}.
Intuitively, the larger $\alpha$, $\beta$, $\gamma$ are, the faster the convergence rate is. However, the choices of these parameters will affect the IFP index by \eqref{problem to get nu}, and hence affect the feasible range of $\sigma(t)$.

In fact, for proper parameters, there is usually a wide feasible range for the coupling gain.
Let us take for instance the quadratic functions (i.e., linear time-invariant systems in \eqref{subsystems}) from the perspective of passivity, with $\alpha = \beta = \gamma = 1$. When the strong convexity parameter $\mu_i \gg 1$, it can be shown by solving the LMI in \cite[Lemma 2]{kottenstette2014relationships} that the IFP index $\nu_i$ is infinitesimal for each agent. Therefore, $\sigma_e$ can be arbitrarily large based on the above theorems, which corresponds with the observation in \cite[Remark 2]{kia2015distributed}, where it is said that $\sigma$ can be chosen to be any positive value for the algorithm to converge in numerical examples, rendering fully distributed in practice.
However, this is in general not true.
When $\mu_i \ll 1$, each agent is IFPS with a large-magnitude index, which indicates that the coupling gain cannot be arbitrarily large. The trajectories of systems are not guaranteed to converge if $\sigma$ is not within the feasible range. A numerical example is shown in \Cref{Section Numerical Examples} for this discussion.
Consequently, the design of coupling gain is not fully distributed and requires global information like Laplacian eigenvalues or in/out-degrees.

\Cref{Theorem switching graphs,Theorem switching graphs 2} provide sufficient conditions for convergence for \Cref{Algorithm distributed algorithm}, where the former requires eigenvalues related to the Laplacian and the latter only uses in/out-degrees.
The calculation of eigenvalues could be time-consuming, especially in a large-scale network. There are many distributed algorithms to estimate Laplacian eigenvalues, e.g., \cite{franceschelli2013decentralized,charalambous2015distributed}.
However, these algorithms are obviously not as simple as obtaining the maximum value of $d^i |\nu_i|$ needed in \Cref{Theorem switching graphs 2} by locally comparing them among neighboring agents. For example, when the graph is fixed and strongly connected, let $D_i(0) = d^i | \nu_i|$ for agent $i$, $\forall i \in \mathcal{N}$, and consider the following iteration,
\begin{equation}\label{eq: finding largest value}
D_i(k+1) = \max \left\{ D_i(k), \{ D_j(k), ~ \forall j \in \mathcal{N}_i \} \right\}, \forall i \in \mathcal{N}.
\end{equation}
Obviously, the result can be obtained in a finite number of iterations since it is simply broadcasting the maximum value along the directed path. The number of iterations needed is no greater than the longest path in the graph, while the longest path is no greater than the total number of nodes. Thus, the condition in \Cref{Theorem switching graphs 2} should be easier to verify than the one in \Cref{Theorem switching graphs} in a distributed sense.

\vspace{2mm}
Note that applying a time-varying $\sigma(t)$ can be beneficial. When the graph is strongly connected, all agents should select an identical coupling gain $\sigma$ to ensure optimality. However, when the graph $\mathcal{G}(t)$ is not strongly connected at some time $t$, it is hard to communicate and obtain an identical $\sigma$ for all agents.
In this case, we can still use \eqref{eq: finding largest value} to choose different coupling gains for agents in different disjoint subgraphs without affecting convergence to the optimal point.
Suppose that the node set $\mathcal{N}$ consists of $q(t) \geq 1$ isolated subsets and let $\mathcal{N}^k(t)$ denote the $k$th subset at time $t$.
At this time, by the weight-balanced property, all the disjoint subgraphs of $\mathcal{G}(t)$ are strongly connected respectively \cite{chopra2006passivity,li2019input}.
Then each subgroup of agents at time $t$ is considered as an isolated system, and thus convergence is guaranteed by \Cref{Theorem switching graphs 2}. Following similar lines of the proof of \Cref{Theorem switching graphs 2}, we have
$
\sum_{i \in \mathcal{N}^k(t)} \lambda_i(t) = \sum_{i \in \mathcal{N}^k(t)} \lambda_i(0)
$ and hence 
\begin{align*}
\sum_{i \in \mathcal{N}} \lambda_i(t) =\sum_{k=1}^{q(t)} \sum_{i \in \mathcal{N}^k(t)}\lambda_i(0) = 
\sum_{i \in \mathcal{N}} \lambda_i(0) = \mathbf{0}.
\end{align*}
By \Cref{Lemma unique optimal point} the optimality is preserved.

The above discussion is summarized as the following corollary.
\begin{corollary}\label{Corollary different coupling gains}
Under Assumptions \ref{Assumption strongly convex} and \ref{Assumption switching graphs}, the states of \Cref{Algorithm distributed algorithm} with initial condition $\sum_{i\in \mathcal{N}} \lambda_i(0) = \mathbf{0}$ will converge to the optimal point and solve problem \eqref{problem} if the coupling gain $\sigma_i(t)$ for agent $i$ satisfies 
\begin{align}
\frac{1}{2} - \sigma_i(t) |\nu_i| d^{i}(t) > 0, ~\forall i \in \mathcal{N}
\end{align}
and $\sigma_i(t) = \sigma_j(t)$, $\forall i, j \in \mathcal{N}^k(t)$, for all $k = 1, \ldots, q(t)$.
\end{corollary}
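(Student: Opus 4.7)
The plan is to follow closely the proof strategy of Theorem 2, exploiting the fact that a weight-balanced digraph decomposes at each instant into strongly connected and weight-balanced components. Use the same Lyapunov candidate $V = \sum_{i \in \mathcal{N}} V_i$, with $V_i$ as in \eqref{Storage function}; by \Cref{Lemma nonlinear IFP} we again have $\dot V_i \leq y_i^T u_i - \nu_i u_i^T u_i$ on each error subsystem $\Sigma_i$. The only structural difference with Theorem 2 is that the coupling gain is now agent-indexed, $\sigma_i(t)$, rather than common.

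First I would fix a time $t$ and decompose $\dot V \leq \sum_{k=1}^{q} \sum_{i \in \mathcal{N}^k(t)} (y_i^T u_i - \nu_i u_i^T u_i)$, noting that $u_i$ for $i \in \mathcal{N}^k(t)$ involves only neighbors inside the same subset. Because $\sigma_i(t)$ is constant over each $\mathcal{N}^k(t)$ (call its value $\sigma^k(t)$), the antisymmetrization trick used in Theorem~2 goes through component-by-component: the cross term $(y_i^T y_i - y_j^T y_j)$ collapses via $(\mathbf{1}^T_{|\mathcal{N}^k(t)|} \otimes I_m)\mathbf{L}^k(t) = \mathbf{0}$ (using that the subgraph on $\mathcal{N}^k(t)$ is weight-balanced), and Cauchy-Schwarz plus the bound \eqref{coupling gain switching graphs 2} restricted to $\mathcal{N}^k(t)$ yields
\begin{equation*}
\dot V \leq -\sum_{k=1}^{q} \sigma^k(t) \sum_{i \in \mathcal{N}^k(t)} \Bigl(\tfrac{1}{2} - \sigma^k(t) |\nu_i| d^i(t)\Bigr) \sum_{j \in \mathcal{N}_i(t)} a_{ij}(t)\, \|y_j - y_i\|^2 \leq 0.
\end{equation*}

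Next I would verify the invariance $\sum_{i \in \mathcal{N}} K_i \lambda_i(t) \equiv \mathbf{0}$. Using $K_i J_i = C^T$ and weight-balance within each component,
\begin{equation*}
\frac{d}{dt}\sum_{i \in \mathcal{N}^k(t)} K_i \lambda_i = -\gamma C^T \sum_{i \in \mathcal{N}^k(t)} u_i = -\gamma \sigma^k(t)\, C^T \bigl(\mathbf{1}_{|\mathcal{N}^k(t)|}^T L^k(t) \otimes I_m\bigr)\mathbf{C} x = \mathbf{0},
\end{equation*}
so the global sum satisfies $\sum_{i \in \mathcal{N}} K_i \lambda_i(t) = \sum_{k} \sum_{i \in \mathcal{N}^k(t)} K_i \lambda_i(0) = \sum_{i \in \mathcal{N}} K_i \lambda_i(0) = \mathbf{0}$, and this identity persists through any switching of the component partition since the cancellation holds pointwise in time. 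Finally, radial unboundedness of $V$ and $\dot V \leq 0$ confine the trajectories, so Barbalat/LaSalle for nonautonomous systems applies exactly as in Theorem~2, and the UJSC property of $\{\mathcal{G}(t)\}$ forces $y_i \to y_j$ for all $i,j$; combined with the preserved invariance, \Cref{Lemma unique optimal point} identifies the limit as the unique optimum.

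The only real obstacle is making sure the common-gain assumption within each component is used in the right place: it is precisely what permits factoring $\sigma^k(t)$ out of the sum before invoking weight-balance of the component's sub-Laplacian, both in the Lyapunov computation and in the invariance of $\sum K_i \lambda_i$. Once that observation is in place, the rest is a straightforward block-diagonal replay of Theorem~2.
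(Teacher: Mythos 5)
Your proposal is correct and follows essentially the same route as the paper: the paper justifies this corollary in the discussion preceding it by treating each disjoint (necessarily strongly connected, weight-balanced) subgraph as an isolated system to which \Cref{Theorem switching graphs 2} applies, and by verifying that the conservation of $\sum_{i\in\mathcal{N}} K_i\lambda_i$ survives the component-wise gains because the per-component cancellation $\mathbf{1}^T L^k(t) = \mathbf{0}$ holds pointwise in time. Your observation that the invariance must be argued pointwise in time (since the partition itself switches) is stated slightly more carefully than in the paper, but the substance is identical.
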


\section{Distributed Derivative Feedback Algorithm}\label{Section Distributed Derivative Feedback Algorithm}
Note that \Cref{Algorithm distributed algorithm} still depends on the global information $\max_{i}\{ d^{i}(t) |\nu_i|\}$.
In this section, we propose a fully distributed derivative feedback algorithm based on the passivation of \Cref{Algorithm distributed algorithm}.
\subsection{Passivation And Derivative Feedback}
The derivative feedback is widely used in distributed algorithms to ensure convergence or to modify algorithms for directed graphs \cite{antipin1994feedback,deng2017distributed,zeng2018distributeda, zeng2018distributedb}.
In this subsection, we design a new distributed algorithm and reveal that the input-feedforward passivation of IFPS agents through an internal feedforward loop is a form of derivative feedback.

Let us consider again each error subsystem $\Sigma_i$ in \eqref{subsystems}.
Suppose $\Sigma_i$ is IFP($\nu_i$), $\forall i \in \mathcal{N}$, then we apply a passivation through feedforward of input. Define a new output as $\tilde{y}_i$ for the $i$th subsystem
\begin{equation}\label{new output after passivation}
\tilde{y}_i = y_i - \nu_i u_i, \quad \forall i \in \mathcal{N}
\end{equation}
where $\nu_i \leq 0$ is the IFP index of agent $i$. The transformation is shown in \Cref{Input-feedforward-diagram}.
Obviously, the transformed system $\tilde{\Sigma}_i$ is passive.

\begin{figure}[htbp]
\centering
\includegraphics[width=0.80\linewidth,clip,keepaspectratio]{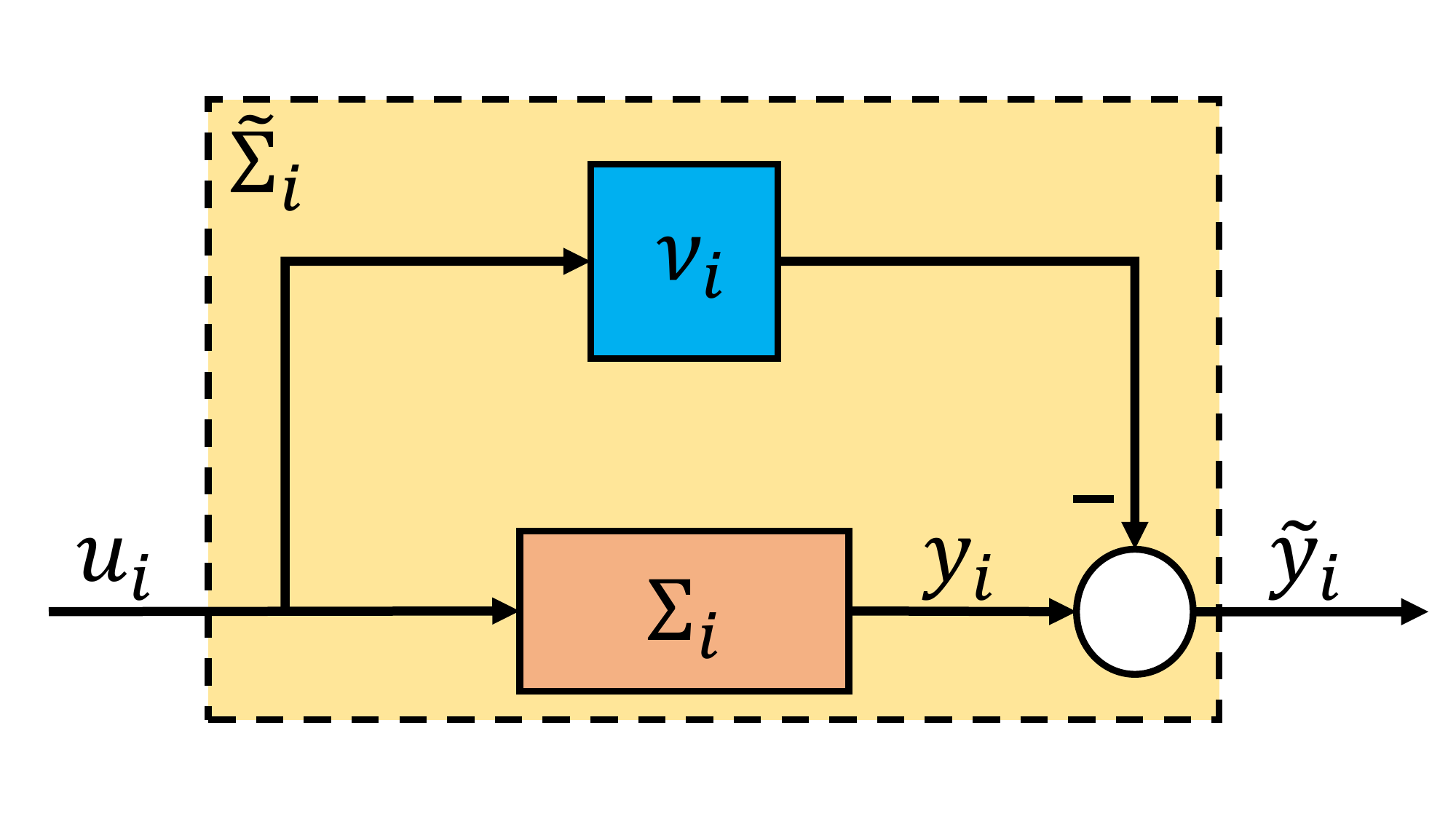}
\caption{Block diagram of the input-feedforward passivation of the $i$th virtual agent $\Sigma_i$ in \eqref{subsystems}. The notation $\tilde{\Sigma}_i$ denotes the transformed system after the input-feedforward passivation.}\label{Input-feedforward-diagram}
\end{figure}
\begin{lemma}\label{Lemma derivative feedback being pasive}
Under \Cref{Assumption strongly convex}, each subsystem $\tilde{\Sigma}_i$ defined by \eqref{subsystems} and \eqref{new output after passivation}, is passive from input $u_i$ to output $\tilde{y}_i$ with respect to the storage function \eqref{Storage function}.
\end{lemma}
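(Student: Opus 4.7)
The proof plan is essentially to read off passivity as an immediate corollary of \Cref{Lemma nonlinear IFP} combined with the definition of the redefined output $\tilde{y}_i = y_i - \nu_i u_i$. The whole point of the input-feedforward passivation construction in \eqref{new output after passivation} is to absorb the quadratic excess term $-\nu_i u_i^T u_i$ into the supply rate by modifying the output, so the core of the argument is algebraic rather than dynamical.

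Concretely, first I would recall that \Cref{Lemma nonlinear IFP} already establishes, with the storage function $V_i$ in \eqref{Storage function}, the dissipation inequality
\begin{equation*}
\dot{V}_i \leq y_i^T u_i - \nu_i u_i^T u_i, \quad \forall i \in \mathcal{N},
\end{equation*}
and that $V_i$ is positive definite (and radially unbounded) so it is a legitimate storage function for $\tilde{\Sigma}_i$ as well, since the state equations of $\Sigma_i$ and $\tilde{\Sigma}_i$ coincide and only the output map is changed. Second, I would substitute the definition $\tilde{y}_i = y_i - \nu_i u_i$ into $u_i^T \tilde{y}_i$ to obtain $u_i^T \tilde{y}_i = u_i^T y_i - \nu_i u_i^T u_i$, which is exactly the right-hand side of the inequality above. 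Combining the two yields $\dot{V}_i \leq u_i^T \tilde{y}_i$, which is the passivity inequality \eqref{passivity definition inequality} for $\tilde{\Sigma}_i$ with input $u_i$ and output $\tilde{y}_i$.

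There is really no technical obstacle here: the statement is the canonical identity that an IFP$(\nu)$ system becomes passive under the feedforward output transformation $\tilde{y} = y - \nu u$, and this was already noted in the paragraph following the definition of IFP in the preliminaries. The only thing worth being careful about is that the same storage function $V_i$ defined in \eqref{Storage function} serves for both $\Sigma_i$ and $\tilde{\Sigma}_i$, so nothing needs to be redesigned; in particular, the choice $\eta_i > \tfrac{1}{\mu_i \alpha \gamma}$ made in the proof of \Cref{Lemma nonlinear IFP} carries over verbatim, and positive definiteness of $V_i$ is inherited. The proof therefore reduces to a one-line calculation and a citation of \Cref{Lemma nonlinear IFP}.
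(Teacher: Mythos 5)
Your proposal is correct and follows essentially the same route as the paper: reuse the storage function \eqref{Storage function}, invoke the dissipation inequality $\dot{V}_i \leq y_i^T u_i - \nu_i u_i^T u_i$ from \Cref{Lemma nonlinear IFP}, and observe that $\tilde{y}_i^T u_i = y_i^T u_i - \nu_i u_i^T u_i$ by \eqref{new output after passivation}. The paper's proof is exactly this one-line substitution, so nothing is missing.
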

\begin{proof}
Adopt the same storage function \eqref{Storage function}, then following similar lines of the proof of \Cref{Lemma nonlinear IFP}, one has $V_i \geq 0$ and
\begin{equation}\label{dot of V_i passive}
\begin{aligned}
\dot{V}_i \leq  y_i^T u_i - \nu_i u_i^T u_i
\leq  \tilde{y}_i^T u_i.
\end{aligned}
\end{equation}
\end{proof}
Adopt the diffusive couplings of $\tilde{y}_i$, $i \in \mathcal{N}$ as new inputs,
\begin{equation}\label{new input after passivation}
u_i = \sigma(t) \sum_{j\in \mathcal{N}_{i}}a_{ij}(t)\left(\tilde{y}_j - \tilde{y}_i \right),\quad \forall i \in \mathcal{N}
\end{equation}
then a novel distributed algorithm is constructed as follows.
\begin{algorithm}[H]
\caption{Distributed Derivative Feedback Algorithm}
\label{Algorithm Derivative feedback}
\begin{algorithmic}
\STATE
\begin{itemize}
\item \textbf{Initialization}:
\begin{enumerate}
\item Choose arbitrary constants $\alpha >0$, $\beta \in \mathbb{R}$, $\gamma > 0$.
\item Choose any $x_i(0) \in \mathbb{R}^{m}$, and $\lambda_i(0) \in \mathbb{R}^{m}$ such that $\sum_{i \in \mathcal{N}} \lambda_i(0) = \mathbf{0}$.
\item Each agent evaluates $\nu_i$ by \eqref{problem to get nu}.
\end{enumerate}
\item \textbf{Dynamics for agent $i$, $i \in \mathcal{N}$}:
\begin{subequations}\label{System Derivative feedback algorithm}
\begin{align}
\dot{x}_i &= -\alpha \nabla f_i(x_i) - \lambda_i + \beta u_i\\
\dot{\lambda}_i &= -\gamma u_i \label{algorithm 2 dot_lam}\\
\tilde{y}_i &= x_i - \nu_i u_i \label{algorithm 2 tilde_y}\\
u_i &= \sigma(t) \sum_{j\in \mathcal{N}_{i}(t)}a_{ij}(t)(\tilde{y}_j - \tilde{y}_i)\label{input derivative feedback}
\end{align}
\end{subequations}
\end{itemize}
\end{algorithmic}
\end{algorithm}

By eliminating $\tilde{y_i}$ and $u_i$ with \eqref{algorithm 2 tilde_y} and \eqref{algorithm 2 dot_lam}, respectively, \Cref{Algorithm Derivative feedback} can be rewritten in a compact form
\begin{subequations}\label{System Compact form of Derivative feedback algorithm}
\begin{align}
\dot{x} =& -\alpha \nabla f(x) - \lambda - \frac{\beta}{\gamma} \dot{\lambda}\\
\dot{\lambda} =& \sigma(t) \gamma \mathbf{L}(t) x + \sigma(t)\mathbf{L}(t)\nu \dot{\lambda} \label{System Compact form of dot lambda}
\end{align}
\end{subequations}
where $\nu = \text{diag}\{ \nu_1, \ldots, \nu_N \} \otimes I_{m}$. We can observe that there exist derivative feedback terms in \eqref{System Compact form of Derivative feedback algorithm}.
Since each agent only requires information from neighboring agents, \Cref{Algorithm Derivative feedback} is a distributed algorithm.

\vspace{1em}
Before proceeding to next step, note that the diffusive couplings of the new outputs bring \textit{algebraic loops} \cite[Section 8.3]{astrom2010feedback} into the overall closed-loop system. 
Thus, we have to check whether the feedback interconnection is well-posed.

The equation \eqref{System Compact form of dot lambda} can be rewritten as 
\begin{equation}
\left(I - \sigma(t) \mathbf{L}(t)\nu\right)\dot{\lambda} = \sigma(t) \gamma \mathbf{L}(t)  x
\end{equation}
Notice that $\left(I - \sigma(t) \mathbf{L}(t)\nu\right)$ should be nonsingular such that system \eqref{System Compact form of Derivative feedback algorithm} can be rewritten in the following explicit form, ensuring the well-posedness of the feedback interconnection \cite{simpson2018equilibrium}.
\begin{subequations}\label{System Explicit Compact form of Derivative feedback algorithm}
\begin{align}
\dot{x} =& -\alpha \nabla f(x) - \lambda - \sigma \beta \left(I - \sigma \mathbf{L}(t) \nu \right)^{-1} \mathbf{L}(t) x\\
\dot{\lambda} =& \sigma \gamma \left(I - \sigma \mathbf{L}(t) \nu \right)^{-1} \mathbf{L}(t) x.\label{explicit expression of dot_lambda}
\end{align}
\end{subequations}
When the IFP indices are the same, e.g., $\nu_i = \bar{\nu}$, $i \in \mathcal{N}$, where $\bar{\nu}$ was defined in \Cref{Theorem switching graphs}, then the nonsingularity of $\left(I - \sigma(t) \bar{\nu} \mathbf{L}(t) \right)$ is obvious following a matrix decomposition. However, when the IFP indices take different values, more analysis of this term is needed. We propose the following lemma.

\begin{lemma}\label{Lemma The matrix is nonsingular}
The matrix $\left(I - \sigma(t) \mathbf{L}(t) \nu \right)$ is nonsingular.
\end{lemma}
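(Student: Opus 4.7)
The plan is to reduce the claim to nonsingularity of a simpler matrix via two invertible substitutions, and then to exploit the weight-balanced property of $\mathcal{G}(t)$ through a Hermitian-form argument. First, since $\mathbf{J}$ is invertible, we have the conjugation identity $I - \sigma(t)\mathbf{J}\mathbf{L}(t)\nu\mathbf{J}^{-1} = \mathbf{J}(I - \sigma(t)\mathbf{L}(t)\nu)\mathbf{J}^{-1}$, so it suffices to show that $I - \sigma(t)\mathbf{L}(t)\nu$ is nonsingular. Second, recalling that \Cref{Algorithm Derivative feedback} is applied in the IFPS regime where every $\nu_i$ is strictly negative, $D := -\nu$ is a positive definite diagonal matrix and in particular invertible.

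Now suppose $v \in \mathbb{C}^{Nm}$ satisfies $(I - \sigma(t)\mathbf{L}(t)\nu)v = 0$, equivalently $v = -\sigma(t)\mathbf{L}(t)Dv$. Setting $w := Dv$ (so $v = D^{-1}w$) and left-multiplying by $D^{-1}$ rewrites the identity as $(D^{-1} + \sigma(t)\mathbf{L}(t))w = 0$; since $D$ is invertible, $v = 0$ iff $w = 0$. Taking the Hermitian pairing with $w$ and extracting the real part yields
\[
w^{*} D^{-1} w + \tfrac{\sigma(t)}{2}\, w^{*} (\mathbf{L}(t) + \mathbf{L}^{T}(t)) w = 0.
\]
The decisive observation is that for any weight-balanced digraph the symmetric part $L(t) + L^{T}(t)$ is positive semidefinite (it equals twice the Laplacian of the undirected mirror graph with weights $(a_{ij}+a_{ji})/2$), whence $\mathbf{L}(t) + \mathbf{L}^{T}(t) = (L(t)+L^{T}(t)) \otimes I_m \succeq 0$. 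With $\sigma(t) > 0$ and $D^{-1} \succ 0$, both terms above are nonnegative and their sum is zero, forcing each to vanish; in particular $w^{*} D^{-1} w = 0$ implies $w = 0$ and hence $v = 0$.

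I expect this to be essentially the entire proof, with no real obstacle to overcome. The only step that requires a moment's thought is recognizing that the weight-balanced hypothesis is exactly what is needed to control the indefinite part of $\mathbf{L}(t)$; were this hypothesis dropped, counterexamples are easy to construct. A minor bookkeeping point is that the strict IFPS assumption on each $\nu_i$ must be in force, since otherwise $D$ could degenerate and one would have to reduce to the positive-definite support of $D$ using a principal-submatrix argument on $\mathbf{L}(t)+\mathbf{L}^T(t)$; fortunately, strict negativity of each $\nu_i$ is exactly the setting of \Cref{Algorithm Derivative feedback}.
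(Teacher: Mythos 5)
Your proof is correct. It rests on the same two ingredients as the paper's argument --- the weight-balanced hypothesis forcing $L(t)+L^{T}(t)\succeq 0$, and the strict IFPS assumption making $-\nu$ positive definite --- but it is packaged differently. The paper first strips off the $\mathbf{J}$-conjugation (as you do), then localizes the spectrum of the product $\mathbf{L}(t)\nu$ through a chain of three auxiliary lemmas: invertibility of $I+Q$ iff $-1\notin\mathrm{spec}(Q)$ (\Cref{Lemma invertible}), the real-quadratic-form criterion for eigenvalues with non-negative real part (\Cref{Lemma positive eigenvalues}), and the similarity $L M = M^{-1/2}\bigl(M^{1/2}LM^{1/2}\bigr)M^{1/2}$ for $M\succ 0$ (\Cref{Lemma non-negative eigenvalues of products}), concluding that $-1$ cannot be an eigenvalue of $-\sigma(t)\mathbf{J}\mathbf{L}(t)\nu\mathbf{J}^{-1}$. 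You instead run a direct kernel computation: the substitution $w=Dv$ plays the role of the paper's congruence by $M^{1/2}$, and taking the real part of the Hermitian pairing is essentially the content of \Cref{Lemma positive eigenvalues}'s proof applied once, in place, to the putative null vector. What your route buys is self-containedness and brevity (no eigenvalue machinery, no ``take $m=1$ without loss of generality''); what the paper's route buys is reusable spectral information about $\mathbf{L}(t)\nu$ rather than just the non-vanishing of one determinant. Your two side remarks --- that weight-balance is exactly what controls the skew part of $\mathbf{L}(t)$, and that strict negativity of each $\nu_i$ is needed so that $D$ does not degenerate --- are both accurate and consistent with the paper's standing assumptions for \Cref{Algorithm Derivative feedback}.
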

\begin{proof}
See the appendix.
\end{proof}

\subsection{Algorithm Over UJSC Balanced Topologies}
Next, we derive the following theorem stating that \Cref{Algorithm Derivative feedback} is fully distributed without global coordination.

\begin{theorem}\label{Theorem Algorithm Derivative feedback UJSC}
Under Assumptions \ref{Assumption strongly convex} and \ref{Assumption switching graphs}, the states of \Cref{Algorithm Derivative feedback} with initial condition $\sum_{i\in \mathcal{N}} \lambda_i(0) = \mathbf{0}$ will converge to the optimal point and solve problem \eqref{problem} given any coupling gain $\sigma(t) > 0$.
\end{theorem}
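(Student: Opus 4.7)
The plan is to reuse the passivity obtained in \Cref{Lemma derivative feedback being pasive} and run a single global Lyapunov argument. Take the same storage function $V = \sum_{i\in\mathcal{N}} V_i$ with $V_i$ as in \eqref{Storage function}, which is positive definite and radially unbounded. Summing the passivity inequality $\dot{V}_i \leq \tilde{y}_i^T u_i$ from \Cref{Lemma derivative feedback being pasive} over the agents and using the compact form $u = -\sigma(t)\mathbf{L}(t)\tilde{y}$ implied by \eqref{input derivative feedback}, one obtains
\begin{equation*}
\dot{V} \leq \tilde{y}^T u = -\sigma(t)\,\tilde{y}^T\mathbf{L}(t)\tilde{y} = -\tfrac{\sigma(t)}{2}\,\tilde{y}^T\bigl(\mathbf{L}(t)+\mathbf{L}(t)^T\bigr)\tilde{y}.
\end{equation*}
Because $\mathcal{G}(t)$ is weight-balanced, $L(t)+L(t)^T$ is the Laplacian of the mirror graph and hence positive semi-definite, so $\dot{V}\leq 0$ for any $\sigma(t)>0$. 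This is the essential improvement over \Cref{Theorem switching graphs 2}: the input-feedforward passivation has absorbed the IFP shortage $\nu_i$, so the coupling-gain constraint \eqref{coupling gain switching graphs 2} disappears entirely.

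Next I would adapt the LaSalle-type argument used for \Cref{Theorem switching graphs 2}. Radial unboundedness of $V$ together with $\dot{V}\leq 0$ confines the trajectories to a compact sublevel set, while \Cref{Lemma The matrix is nonsingular} ensures well-posedness so that the closed loop admits the explicit form \eqref{System Explicit Compact form of Derivative feedback algorithm} with locally bounded right-hand side. Invoking LaSalle's invariance principle for nonautonomous systems together with the UJSC condition, I expect the trajectory to approach the invariant set
\begin{equation*}
\mathcal{S}_z := \{z_i=\mathbf{0},\ \tilde{y}_i=\tilde{y}_j,\ \forall i,j\in\mathcal{N}\},
\end{equation*}
where $\tilde{y}$-consensus is propagated across $\mathcal{N}$ through the union of subgraphs over each interval of length $T$. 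On $\mathcal{S}_z$, the diffusive coupling \eqref{input derivative feedback} forces $u_i \equiv \mathbf{0}$, so \eqref{algorithm 2 tilde_y} gives $\tilde{y}_i = y_i$, and hence output consensus is achieved; combined with $z_i=\mathbf{0}$ and \eqref{algorithm 2 dot_lam}, this forces $\dot{\lambda}_i\to\mathbf{0}$ and $\dot{x}_i\to\mathbf{0}$, i.e., the trajectory converges to an equilibrium point.

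For optimality, I would verify that the constraint $\sum_{i\in\mathcal{N}} K_i\lambda_i = \mathbf{0}$ is preserved along the flow. Multiplying $\dot{\lambda} = -\gamma\mathbf{J}u$ by $(\mathbf{1}_N\otimes I_m)^T\mathbf{K}$, using $K_iJ_i=C^T$, and noting that the balanced property $\mathbf{1}_N^T L(t)=\mathbf{0}$ yields $\sum_{i\in\mathcal{N}} u_i = \mathbf{0}$, one obtains
\begin{equation*}
\sum_{i\in\mathcal{N}} K_i\dot{\lambda}_i = -\gamma\, C^T \sum_{i\in\mathcal{N}} u_i = \mathbf{0}.
\end{equation*}
Hence $\sum_i K_i\lambda_i(t)\equiv\sum_i K_i\lambda_i(0)=\mathbf{0}$, and \Cref{Lemma unique optimal point} then identifies the limit equilibrium with the unique optimal point of \eqref{problem}. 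The main obstacle is the careful handling of LaSalle on this time-varying closed loop in the presence of the algebraic loop introduced by the passivation: \Cref{Lemma The matrix is nonsingular} is what lets us work with a bona fide ODE \eqref{System Explicit Compact form of Derivative feedback algorithm}, and the UJSC hypothesis is precisely what upgrades the pointwise semi-definiteness of $L(t)+L(t)^T$ into uniform consensus of $\tilde{y}$ across all agents.
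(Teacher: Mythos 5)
Your proposal is correct and follows essentially the same route as the paper: the same storage function $V=\sum_i V_i$, the passivity of the transformed agents $\tilde{\Sigma}_i$ from \Cref{Lemma derivative feedback being pasive} giving $\dot{V}\leq -\frac{\sigma(t)}{2}\tilde{y}^T(\mathbf{L}(t)+\mathbf{L}(t)^T)\tilde{y}\leq 0$ by weight-balancedness (the paper writes this as an edge sum), the LaSalle argument inherited from \Cref{Theorem switching graphs 2}, and preservation of $\sum_i K_i\lambda_i(t)=\mathbf{0}$ to invoke \Cref{Lemma unique optimal point} (you verify this differentially via $\sum_i u_i=\mathbf{0}$, the paper integrates $\dot{\lambda}$, which is the same computation). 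No gaps of substance.
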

\begin{proof}
When $\dot{\lambda} = \mathbf{0}$, system \eqref{System Compact form of Derivative feedback algorithm} reduces to system \eqref{System Compact form of distributed algorithm}, meaning that the derivative term does not affect the equilibrium set of system \eqref{System distributed algorithm}. Besides, given the initial condition $\sum_{i\in \mathcal{N}} \lambda_i(0) = \mathbf{0}$,
\[ 
\begin{array}{ll}
&(\mathbf{1}_{N} \otimes I_m)^T \lambda\\
=& (\mathbf{1}_{N} \otimes I_m)^T 
 \left( \displaystyle \int_{0}^{t} \left( \sigma(\tau)\gamma \mathbf{L}x + \sigma(\tau) \mathbf{L} \nu \dot{\lambda} \right)d\tau + \lambda(0) \right)\\
=& \gamma \displaystyle \int_{0}^{t} \sigma(\tau) (\mathbf{1}_{N} \otimes I_m)^T (L(\tau) \otimes I_{m}) x(\tau) d\tau\\
& + \displaystyle \int_{0}^{t} \sigma(\tau) (\mathbf{1}_{N} \otimes I_m)^T (L(\tau) \otimes I_{m})\nu \dot{\lambda}(\tau) d\tau + \sum_{i \in \mathcal{N}}  \lambda_i(0)\\
= & \displaystyle \int_{0}^{t} \sigma(\tau) (\mathbf{1}_{N}^T L(\tau) \otimes I_{m}) \left( \gamma x(\tau) + \nu \dot{\lambda}(\tau) \right) d\tau\\
=& \mathbf{0}
\end{array}
\]
where the third equality follows from rules of the Kronecker product and the initial condition, the last follows from $\mathbf{1}_{N}^{T}L(\tau) = \mathbf{0}$.
It can also be shown by using the explicit expression \eqref{explicit expression of dot_lambda} of $\dot{\lambda}$ that $(\mathbf{1}_{N} \otimes I_m)^T \lambda = \mathbf{0}$, satisfying \Cref{Lemma unique optimal point}.
Thus, the equilibrium point of \Cref{Algorithm Derivative feedback} with initial condition $\sum_{i\in \mathcal{N}} \lambda_i(0) = \mathbf{0}$ is still the optimal point to the distributed optimization problem \eqref{problem}.

The information of $(x_i^*,\lambda_i^*)$ is not required for exchange. Then \Cref{Algorithm Derivative feedback} can be implemented by output feedback interconnections of virtual agents $\tilde{\Sigma}_i,~\forall i\in \mathcal{N}$.
Since $\tilde{\Sigma}_i$ is passive from input $u_i$ to output $\tilde{y}_i$ by \Cref{Lemma derivative feedback being pasive}, the consensus analysis among passive agents is similar to that among IFP agents with IFP indices being zero. Specifically, let $V = \sum_{i \in \mathcal{N}} V_i$, where $V_i$ was defined in \eqref{Storage function}. Substituting \eqref{new input after passivation} into \eqref{dot of V_i passive}, we obtain
\begin{align*}
\dot{V} \leq & \sum_{i\in \mathcal{N}} \tilde{y}_i^T u_i= \tilde{y}^T u = - \sigma(t) \tilde{y}^T \mathbf{L}(t) \tilde{y}
\leq 0
\end{align*}
where $\tilde{y} = \text{col}(\tilde{y}_1,\ldots,\tilde{y}_N)$ and $u = \text{col}(u_1,\ldots,u_N) = - \sigma(t)\mathbf{L}(t) \tilde{y}$ and the last inequality follows from the fact that $\mathbf{L}(t) \geq 0$.
Following similar lines of the proof of \Cref{Theorem switching graphs 2}, the states of \Cref{Algorithm Derivative feedback} with initial condition $\sum_{i\in \mathcal{N}} \lambda_i(0) = \mathbf{0}$ will asymptotically converge to the optimal point.
\end{proof}
Similarly, \Cref{Theorem Algorithm Derivative feedback UJSC} can be directly applied to fixed weight-balanced strongly connected digraphs as a special case of UJSC topologies, as stated in the following corollary.
\begin{corollary}\label{Corollary directed graphs for Algorithm 2}
Suppose the communication digraph $\mathcal{G}$ is fixed, strongly connected and weight-balanced. Then, under \Cref{Assumption strongly convex}, the states of \Cref{Algorithm Derivative feedback} with initial condition $\sum_{i\in \mathcal{N}} \lambda_i(0) = \mathbf{0}$ will converge to the optimal point for any $\sigma(t) > 0$.
\end{corollary}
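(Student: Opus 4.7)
The plan is to obtain Corollary 2 as an immediate specialization of Theorem 3, since a fixed strongly connected weight-balanced digraph is the simplest instance of the UJSC weight-balanced setting covered by Assumption 2. Concretely, I would first observe that if $\mathcal{G}(t) \equiv \mathcal{G}$ is strongly connected for all $t$, then for any constant $T > 0$ the union $\cup_{t \in [t_k, t_k + T]} \mathcal{G}(t) = \mathcal{G}$ is strongly connected, so $\{\mathcal{G}(t)\}$ is trivially UJSC. Weight-balancedness pointwise in time is inherited from the fixed graph, and since $\mathcal{G}$ is strongly connected with $N \geq 2$ agents we have $L \neq \mathbf{0}$, so Assumption 2 holds.

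Next I would verify that every other hypothesis of Theorem 3 is satisfied in this setting: Assumption 1 is assumed directly, the initial condition $\sum_{i\in \mathcal{N}} K_i \lambda_i(0) = \mathbf{0}$ is part of the corollary statement, and $\sigma(t) > 0$ is the hypothesis imposed on the coupling gain. Since Theorem 3 grants convergence of Algorithm 2 to the optimal point for any positive coupling gain under Assumptions 1 and 2, it applies verbatim here. The conclusion of Corollary 2 then follows immediately, with no additional analysis required.

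I expect no real obstacle in the proof; the only thing worth commenting on is why the derivative feedback loop remains well-posed in the fixed-graph case. This was already addressed in the earlier discussion via Lemma 5, which establishes nonsingularity of $\bigl(I - \sigma(t)\, \mathbf{J}\mathbf{L}(t) \nu \mathbf{J}^{-1}\bigr)$ for any balanced Laplacian $L(t)$ and any $\sigma(t) > 0$. Since this lemma applies pointwise in $t$, it is automatic here. Accordingly, the proof reduces to a single sentence invoking Theorem 3 with the constant graph $\mathcal{G}(t) \equiv \mathcal{G}$, and I would present it in exactly that compact form.
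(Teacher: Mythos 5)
Your proposal is correct and matches the paper exactly: the paper likewise obtains this corollary by noting that a fixed, strongly connected, weight-balanced digraph is a special case of the UJSC setting, so \Cref{Theorem Algorithm Derivative feedback UJSC} applies verbatim. Your extra remark on well-posedness via \Cref{Lemma The matrix is nonsingular} is consistent with the paper's earlier discussion and adds nothing that changes the argument.
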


\subsection{Discussion on the Derivative Feedback Algorithm}
Though \Cref{Algorithm Derivative feedback} requires agents to exchange with each other more information like derivatives of states, its advantages are significant.

Compared with most works on directed topologies, \Cref{Algorithm Derivative feedback} is robust against randomly changing weight-balanced digraphs with any positive coupling gain and independent of any global information. Since the time interval $T$ for UJSC graphs is not used in the proofs, $\mathcal{G}(t)$ can be relaxed to being strongly connected in a probabilistic sense, namely, it is applicable over gossip-like balanced digraphs \cite{zhang2018distributed}.
It can be observed from \Cref{Input-feedforward-diagram} that this modified algorithm can be easily realized by adding a local input feedforward loop to each subsystem $\Sigma_i$. Since the input $u_i$ of the $i$th virtual agent is the same as the input of the real agent $i$, the input feedforward of virtual agents is actually the same as the input feedforward of real agents.
Also, note that the passivation is achieved by each agent locally, no global information is needed beforehand. Thus, \Cref{Algorithm Derivative feedback} is a fully distributed algorithm.

It might be difficult to characterize the exact convergence rate of \Cref{Algorithm Derivative feedback} due to the existence of the derivative terms. Nevertheless, we will show in a numerical example in \Cref{Section Numerical Examples} that its empirical convergence rate is similar to the one of \Cref{Algorithm distributed algorithm}.


\section{Numerical Examples}\label{Section Numerical Examples}
\subsection*{Example 1}
We present a numerical example to show effects of the proposed algorithms over directed and switching topologies. Consider a network of $4$ agents possessing the following local objective functions $f_i: \mathbb{R} \rightarrow \mathbb{R}, ~i = 1,2,3,4$, respectively.
\[
\begin{array}{lll}
& f_1(\mathrm{x}) = 0.4 \mathrm{x}^2 - \mathrm{x},~ f_2(\mathrm{x}) = \ln(e^{-0.3\mathrm{x}}+ e^{0.5\mathrm{x}}) + 0.6\mathrm{x}^2\\
& f_3(\mathrm{x}) = \mathrm{x}^2 + \cos \mathrm{x},~
f_4(\mathrm{x}) = \frac{\mathrm{x}^2}{\sqrt{\mathrm{x}^2 + 1}} + 0.9 \mathrm{x}^2.
\end{array}
\]
By calculation, we obtain $\mu_1 = l_1 = 0.8$; $\mu_2 = 1.20$, $l_2 = 1.36$; $\mu_3 = 1$, $l_3 = 3$; $\mu_4 = 1.76$, $l_4 = 3.8$.
Let $\alpha = \beta = \gamma = 1$. Then, we obtain that each subsystem in \eqref{subsystems} is IFPS with $\nu_1 =-0.31$, $\nu_2 = -0.49$, $\nu_3 = -1$, and $\nu_4 = -0.68$. Next, we consider two cases of topologies.

\textbf{Case 1}: the agents are connected through a ring graph that is strongly connected and weight-balanced, as shown in \Cref{Fig_Ex1_Case_1}.
\begin{figure}[htbp]
\centering
\includegraphics[width=0.50\linewidth,clip,keepaspectratio]{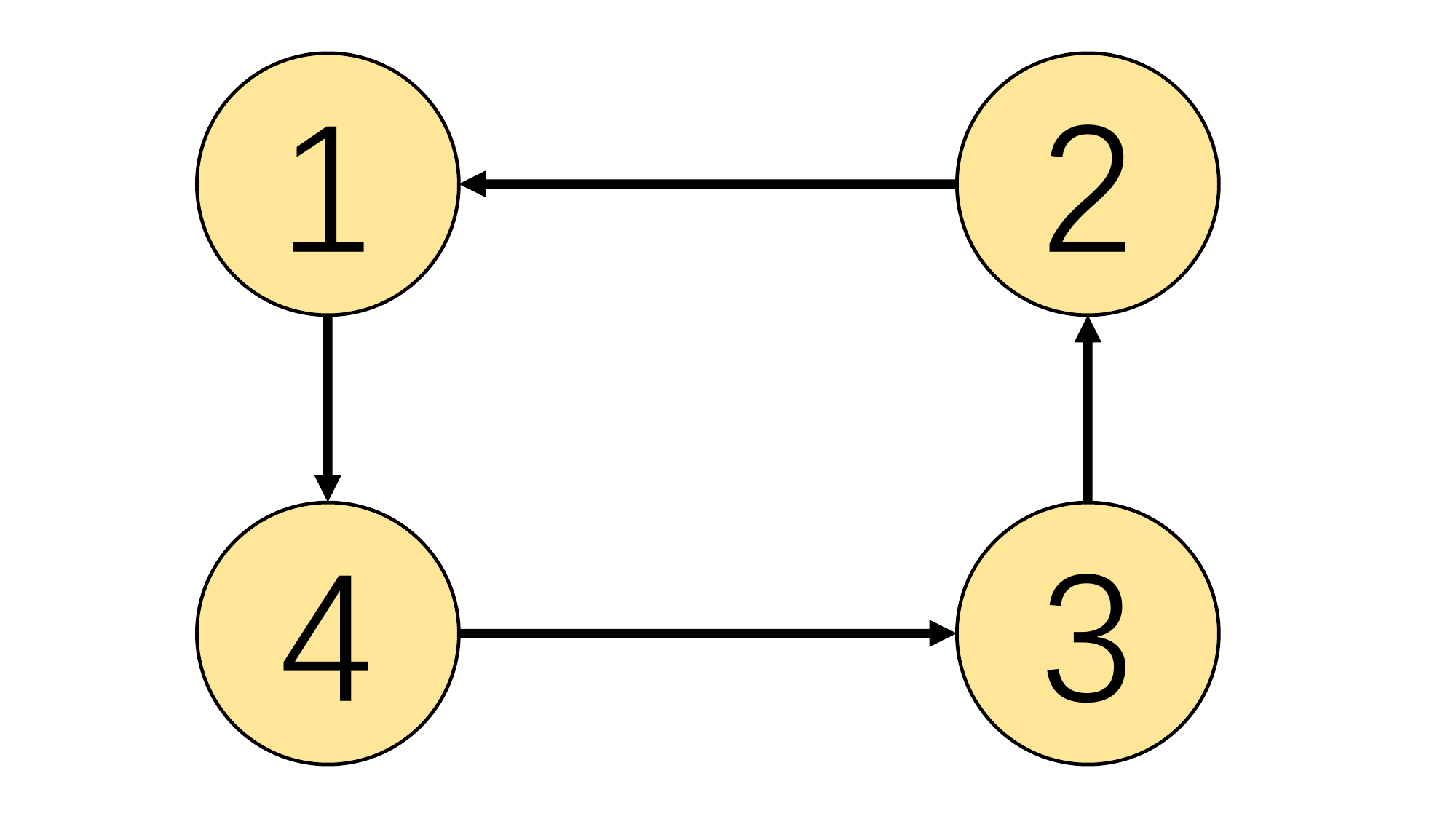}
\caption{The communication graph for the $4$ agents.}\label{Fig_Ex1_Case_1}
\end{figure}


\textbf{Case 2}: for every $0.1$ second, the graph $\mathcal{G}(t)$ switches randomly among three modes as shown in \Cref{Fig_Ex1_Case_2}.
\begin{figure}[htbp]
\centering
\includegraphics[width=1\linewidth,clip,keepaspectratio]{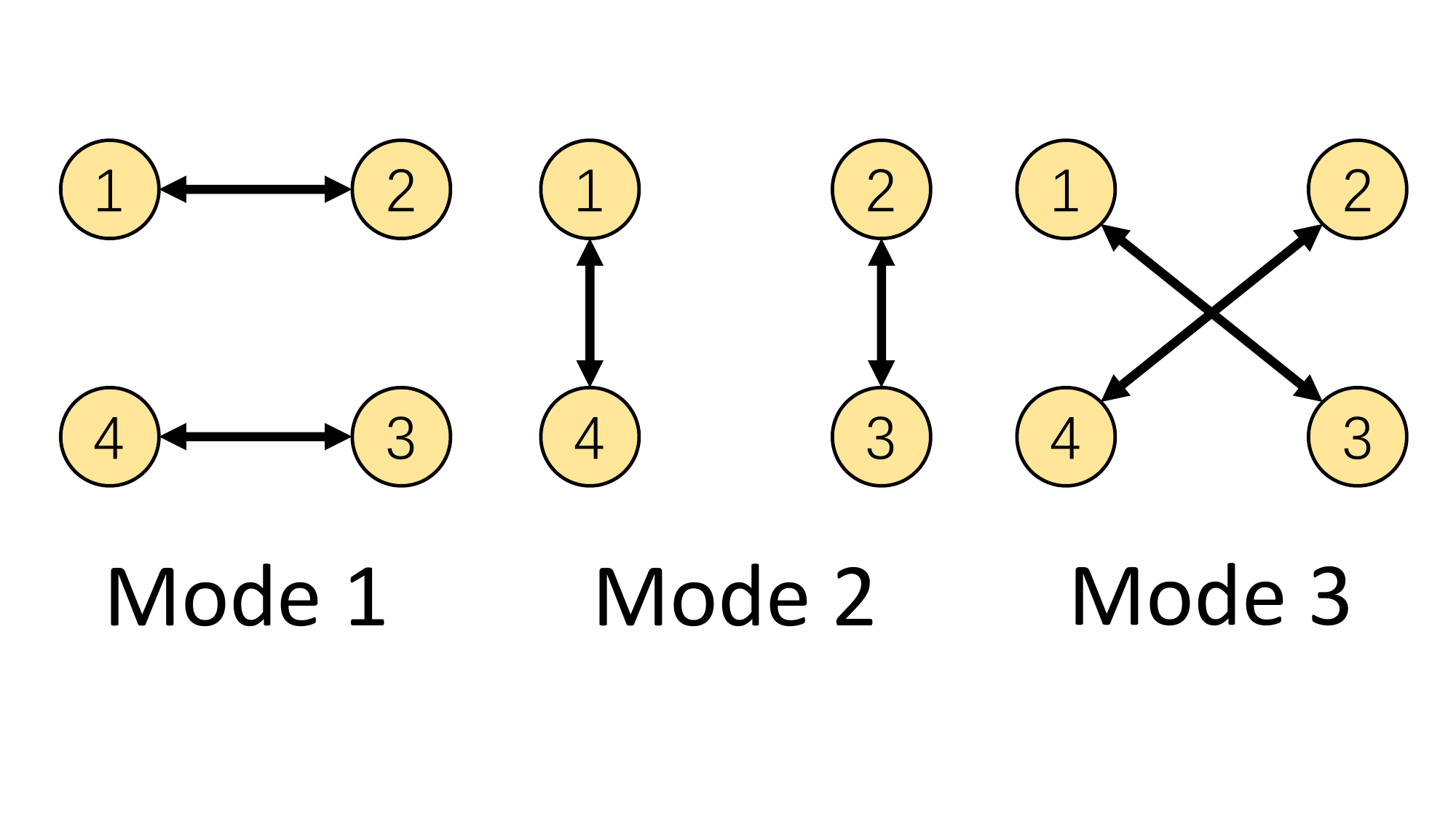}
\caption{The switching communication graph for the $4$ agents.}\label{Fig_Ex1_Case_2}
\end{figure}


The threshold coupling gains are obtained as $\sigma_{e} = 0.50$ in \eqref{coupling gain switching graphs 2} for both cases.
We implement \Cref{Algorithm distributed algorithm,Algorithm Derivative feedback} using the solver ode45 with auto-adjust variable stepsize in MATLAB over these two cases, and with $x_i(0) \in [0,1]$, $\lambda(0) = \mathbf{0}$ satisfying the initial condition.
Let $\sigma(t) = 0.35+0.1\cos(t) < \sigma_{e}$ in Case 1. To illustrate \Cref{Corollary different coupling gains}, we adopt different coupling gains for different disjoint subgraphs in Case 2. Specifically, let $\sigma_1(t) = \sigma_2(t) = 0.3 + \sin(t)$ and $\sigma_3(t) = \sigma_4(t) = 0.35 + \cos(t)$ for Mode 1; $\sigma_2(t) = \sigma_3(t) = 0.3 + \sin(t)$ and $\sigma_1(t) = \sigma_4(t) = 0.35 + \cos(t)$ for Mode 2; $\sigma_2(t) = \sigma_4(t) = 0.3 + \sin(t)$ and $\sigma_1(t) = \sigma_3(t) = 0.35 + \cos(t)$ for Mode 3.

The state trajectories are shown in \Cref{Ex1_Case1_Algorithms,Ex1_Case2_Algorithms,semilog}. It can be observed that the trajectories of $x_i$, $i = 1,2,3,4$ asymptotically converge to the optimal solution $x_i^* = 0.1601,~i = 1, 2, 3, 4$, in both cases.
The residuals $\sum_{i = 1}^{4} \| x_i - x_i^* \|$ of both algorithms over the time-invariant graph in Case 1 are shown in \Cref{semilog}. We can observe that \Cref{Algorithm Derivative feedback} has a convergence rate very similar to \Cref{Algorithm distributed algorithm} despite the existence of derivative terms, and the residuals of both algorithms decrease exponentially.

\begin{figure}[htp]
\centering
\includegraphics[width = 1\linewidth]{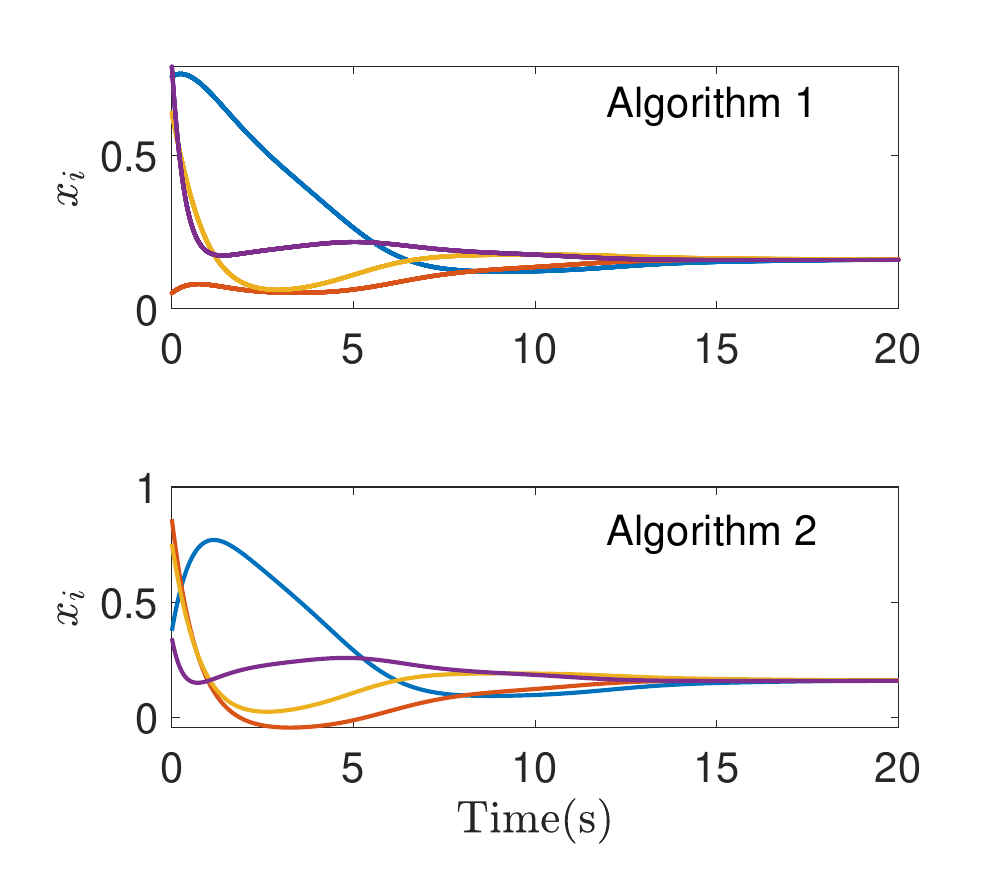}
\caption{The trajectories of $x_i$, $\forall i \in \mathcal{N}$ for the two distributed algorithms with a time-varying coupling gain over a weight-balanced digraph.}
\label{Ex1_Case1_Algorithms}
\end{figure}

\begin{figure}[htp]
\centering
\includegraphics[width = 1\linewidth]{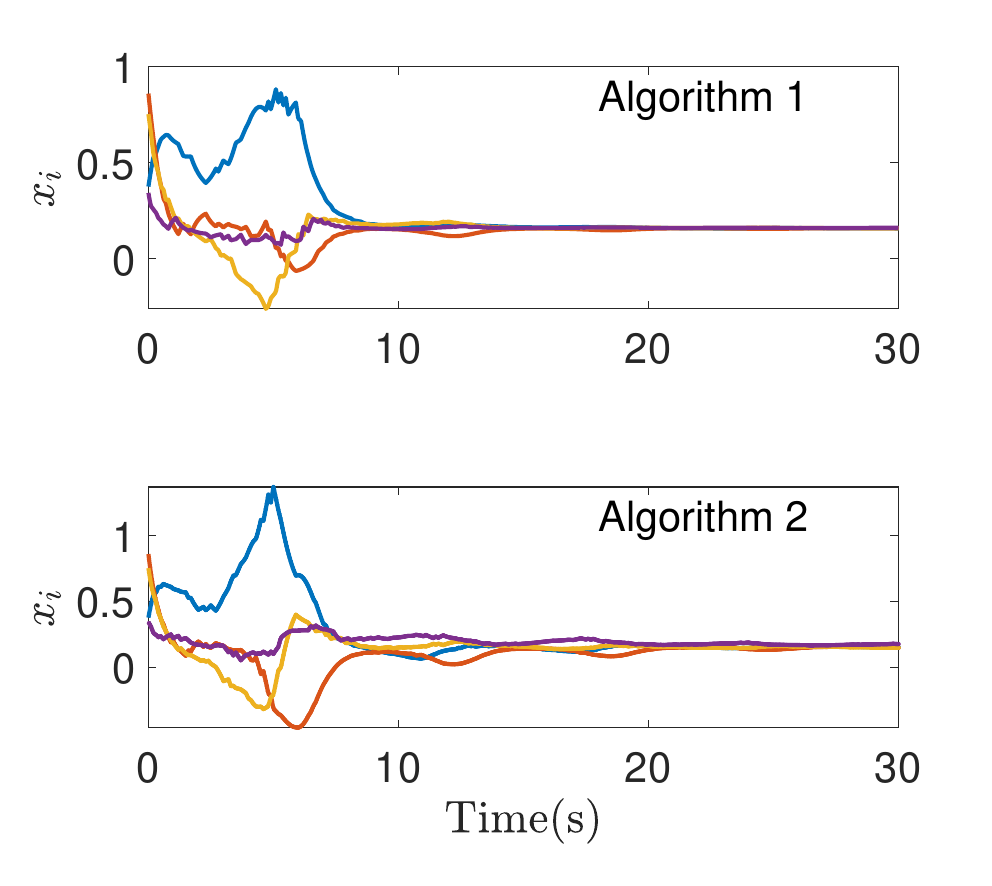}
\caption{The trajectories of $x_i$, $\forall i \in \mathcal{N}$ for the two distributed algorithms over a UJSC graph with different time-varying couplings for disjoint subgraphs.}
\label{Ex1_Case2_Algorithms}
\end{figure}

\begin{figure}[htp]
\centering
\includegraphics[width = 1\linewidth]{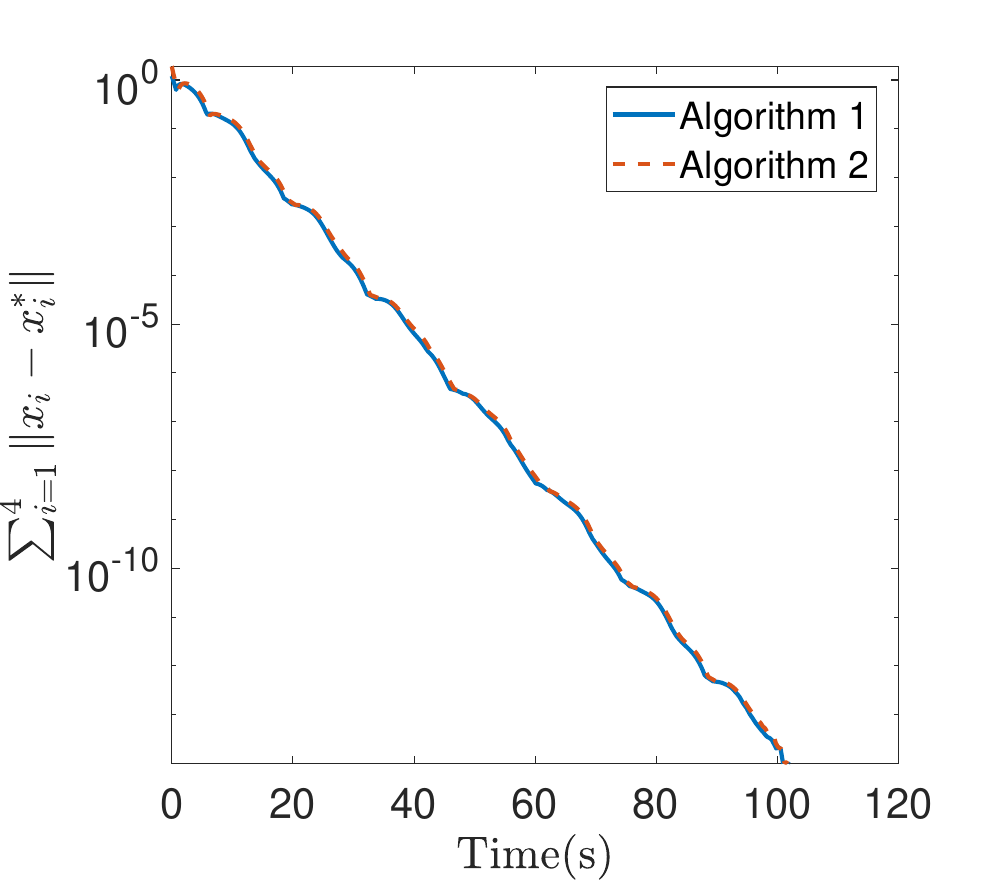}
\caption{The residuals $\sum_{i = 1}^{4} \| x_i - x_i^* \|$ for the two algorithms over the time-invariant graph in Case 1.}
\label{semilog}
\end{figure}

\subsection*{Example 2}
We present another example to compare the two algorithms. Consider a network of $4$ agents interconnected through the same graph as \Cref{Fig_Ex1_Case_1}.
The local objective functions are
$$
f_i(\mathrm{x}) = 0.025(i+1)(\mathrm{x} - i)^2, ~\mathrm{x} \in \mathbb{R},~i = 1, 2, 3, 4.
$$

Let $\alpha = \beta = \gamma = 1$. By solving the LMI in \cite[Lemma 2]{kottenstette2014relationships} with the YALMIP Toolbox \cite{lofberg2004yalmip}, we obtain that the agents are IFPS with $\nu_1 = -89.96$, $\nu_2 = -37.77$, $\nu_3 = -20.00$ and $\nu_4 = -12.00$. Then by \eqref{coupling gain switching graphs 2}, the coupling gain threshold is obtained as $\sigma_{e} = 0.0056$. According to \ref{Theorem switching graphs 2}, when $\sigma < \sigma_{e}$, the trajectories of \Cref{Algorithm distributed algorithm} will converge to the optimal point.
Then, we implement the two distributed algorithms using the solver ode45 with auto-adjust variable stepsize in MATLAB, and with $x_i(0) \in [2,3]$, $\lambda(0) = \mathbf{0}$ satisfying the initial condition. The trajectories of the two algorithms asymptotically converge to the optimal solution $x_i^* = 2.857, ~i = 1, 2, 3, 4$ when $\sigma = 0.005 \in (0,\sigma_e)$, as shown in \Cref{Ex2_Case1_Algorithms}.

When the coupling gain is outside the feasibility range, \Cref{Algorithm distributed algorithm} is not guaranteed to converge.
The error system \eqref{subsystems} is a linear system:
\begin{equation*}
\begin{bmatrix}
\Delta\dot{x}\\
\Delta\dot{\lambda}
\end{bmatrix}
=
\begin{bmatrix}
-F - \sigma L & -I\\
\sigma L & \mathbf{0}
\end{bmatrix}
\begin{bmatrix}
\Delta x\\
\Delta\lambda
\end{bmatrix}
\end{equation*}
where $F = \text{diag}\{0.1, 0.15, 0.2, 0.25\}$.
Clearly, it is unstable when $\sigma \in [0.1, 0.14]$, which accords with our discussion in \Cref{Subsection Discussion on the coupling gain}.
On the other hand, \Cref{Algorithm Derivative feedback} should be valid with any positive $\sigma$ by \Cref{Theorem Algorithm Derivative feedback UJSC}. To show this, we compare the two distributed algorithms with the same settings except $\sigma = 0.1 \notin (0,\sigma_e)$. It can be observed from \Cref{Ex2_Case2_Algorithms} that \Cref{Algorithm distributed algorithm} is unstable while the trajectories of $x_i$, $\forall i \in \mathcal{N}$ in \Cref{Algorithm Derivative feedback} asymptotically converge to the optimal solution. Here the IFP indices are different and agents are passivated locally in \Cref{Algorithm Derivative feedback}.
\begin{figure}[htp]
\centering
\includegraphics[width = 1\linewidth]{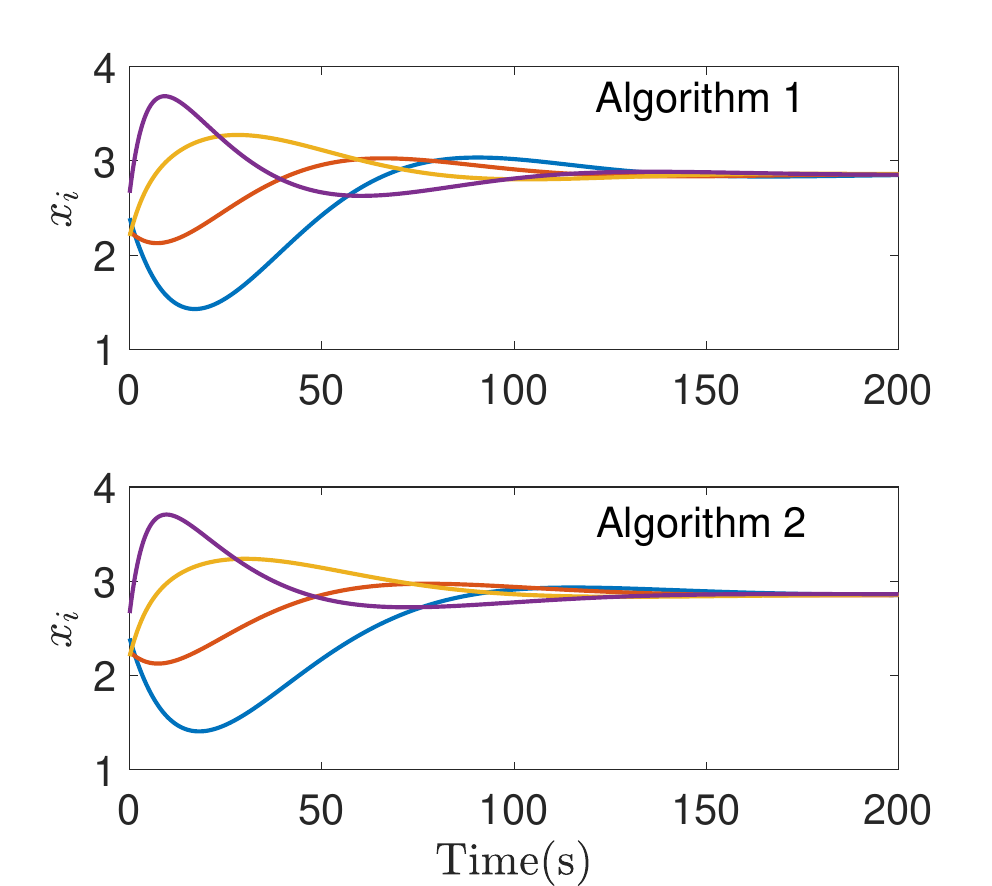}
\caption{The trajectories of $x_i$, $\forall i \in \mathcal{N}$ with $\sigma = 0.005$ for the two distributed algorithms.}
\label{Ex2_Case1_Algorithms}
\end{figure}

\begin{figure}
\centering
\includegraphics[width = 1\linewidth]{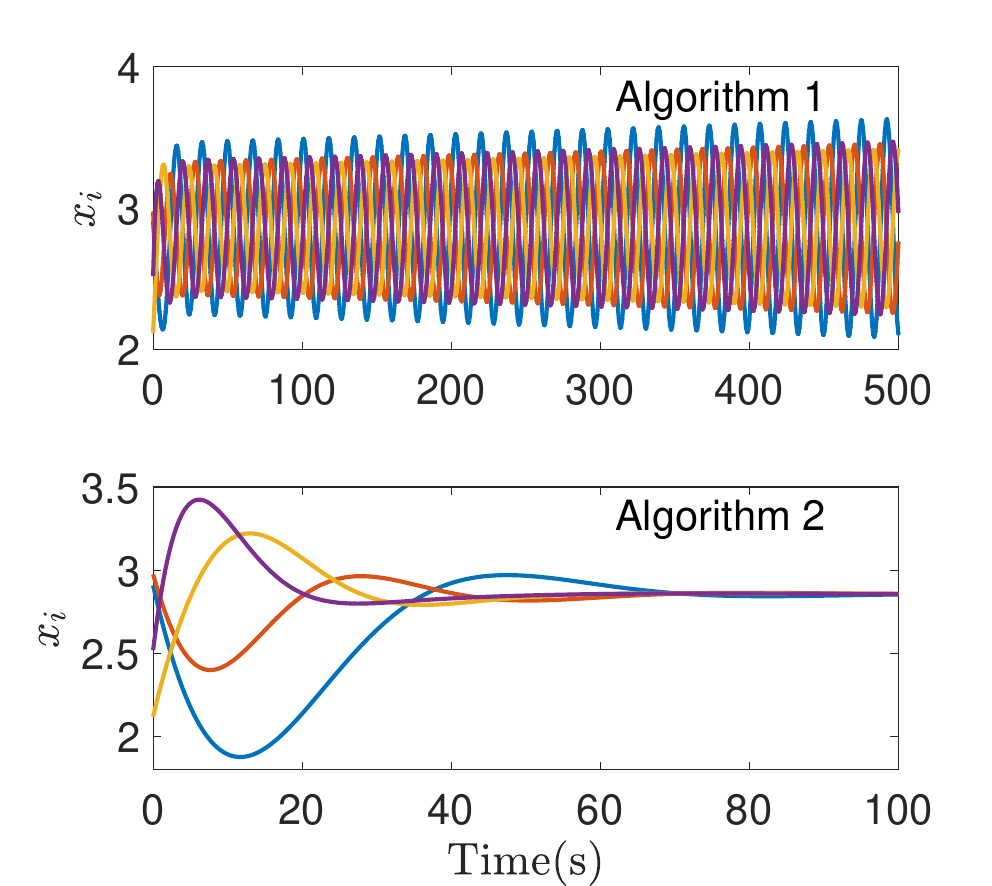}
\caption{The trajectories of $x_i$, $\forall i \in \mathcal{N}$ with $\sigma = 0.1$ for the two distributed algorithms.}
\label{Ex2_Case2_Algorithms}
\end{figure}

\subsection*{Example 3}
We present an example of $N = 100$ agents to show the scalability of the proposed algorithms. The local objective functions are
$f_i(\mathrm{x}) = b_i \mathrm{x}^2 + c_i \mathrm{x}$, $\forall i \in \mathcal{N}$, where $b_i \in [0.5,0.6]$, $c_i \in [-1, 0]$ are randomly generated with uniform distributions. The weight-balanced digraph $\mathcal{G}(t)$ is also randomly generated per second, where the probability that the edge $(i,j) \in \mathcal{E}$ exists is $0.005$, $\forall i \neq j, ~i, j \in \mathcal{N}$, and the in/out-degree $d^i$ is no greater than $\bar{d} = 2.5$, $\forall i \in \mathcal{N}$.
Let $\alpha = \gamma = \beta = 1$, then by calculation, the IFP indices satisfy $\nu_i \geq -2$, $\forall i \in \mathcal{N}$.
Thus, we can roughly obtain $\sigma \leq 0.1$ for \Cref{Algorithm distributed algorithm}.
For Algorithm 2, there is no restriction on the coupling gain, which we set as $\sigma = 1$.
We implement the two algorithms using ode45 in MATLAB with $x_i(0) \in [0,1]$, $\lambda(0) = \mathbf{0}$, and obtain \Cref{fig:algorithm}. It can be observed that the proposed algorithms have good scalability.

Note that an important feature of the algorithms is that the graph Laplacian $L(t)$ at any time can be very sparse. The time interval $T$ in \Cref{Assumption switching graphs} is imposed only to ensure the convergence performance and was not used in the proofs. In practice, our results hold as long as the graph $\mathcal{G}(t)$ is strongly connected in a probabilistic sense.
In this example, $L(t)$ usually has zero eigenvalue with a multiplicity greater than $20$, which greatly reduces communication.
\begin{figure}[htp]
\centering
\includegraphics[width = 1\linewidth]{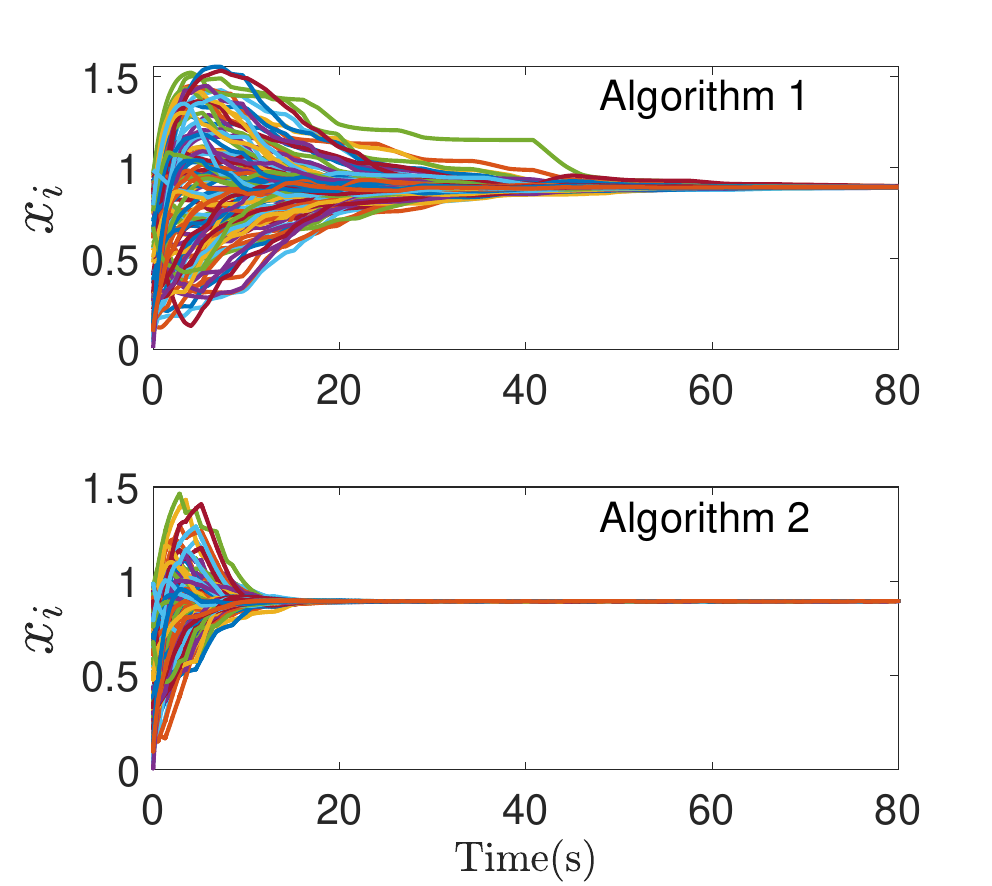}
\caption{The trajectories of $x_i$, $\forall i \in \mathcal{N}$ with $\sigma = 0.1$ for \Cref{Algorithm distributed algorithm} and $\sigma = 1$ for \Cref{Algorithm Derivative feedback}.}
    \label{fig:algorithm}
\end{figure}

\section{Conclusion}\label{Section Conclusion}
This paper has investigated a distributed optimization problem via input feedforward passivity. An input-feedforward-passivity framework has been adopted to construct a distributed algorithm that is applicable over weight-balanced digraphs. Moreover, a novel distributed derivative feedback algorithm, which is fully distributed, has been proposed via the input-feedforward passivation.
The proposed algorithms have been studied over directed and uniformly jointly strongly connected balanced topologies. Convergence conditions of a suitable coupling gain for the IFP-based distributed algorithm have been derived, while it has been shown that the distributed derivative feedback algorithm is robust against randomly changing weight-balanced digraphs with any positive coupling gain and without knowing any global information.

It is worth mentioning that there are also some limitations in this work and several directions can be considered in future work.
For instance, requiring continuous communication is difficult in practice, which can be resolved by applying discrete-time communication or discretization \cite{li2021distributed}.
Also, one could extend the IFP-based distributed algorithms to solve constrained problems or enhance robustness. Lastly, one could consider relaxing the strong convexity requirement using more advanced feedback techniques.


%

\section*{Acknowledgments}
The authors would like to thank the Associate Editor and the Reviewers for their valuable comments. The first author would also like to thank Shunya Yamashita and Prof.~Takeshi~Hatanaka from Tokyo Tech for their useful suggestions.

\appendices
\section*{Appendix}\label{Appendix}

\subsection{Proof of \Cref{Lemma nonlinear IFP}}
Under \Cref{Assumption strongly convex}, one has $\nabla f_i(x_i)- \nabla f_i(x_i^*)= B_{x_i}\left( x_i - x_i^*\right)$,
where 
\begin{align}
B_{x_i} = \int_{0}^{1} \nabla^2 f_i(x_i^* + \tau (x_i - x_i^*))d \tau
\end{align}
is a positive definite matrix satisfying \cite[Lemma 1]{qu2019exponential}
\begin{align}\label{eq: bounded Bxi}
\mu_i I \leq B_{x_i} \leq l_i I.
\end{align}
Clearly, $B_{x_i}$ is invertible and $B_{x_i}^{-1}$ is also positive definite.
Then, the $i$th subsystem in \eqref{subsystems} can be written as
\[
\begin{array}{rl}
\Delta \dot{x}_i &= - \alpha B_{x_i} \Delta x_i - \Delta \lambda_i + \beta u_i\\
\Delta \dot{\lambda}_i &= - \gamma u_i\\
y_i &= \Delta x_i.
\end{array}
\]
Since $\dot{x}_i^* = \dot{\lambda}_i^* \equiv \mathbf{0}$, one has $\dot{x}_i = \Delta \dot{x}_i$ and $\dot{\lambda}_i = \Delta \dot{\lambda}_i$.
Denote $z_i = \alpha \left( \nabla f_i(x_i)- \nabla f_i(x_i^*)\right) + \Delta\lambda_i$, or equivalently, 
\begin{equation}\label{z_i}
z_i = \alpha B_{x_i} \Delta x_i + \Delta \lambda_i.
\end{equation}
Let us consider the storage function
\[
\begin{array}{rl}
V_i =& \frac{\eta_i}{2} \|z_i\|^2 - \frac{1}{\gamma} \Delta x_i^T \Delta \lambda_i + \frac{\alpha}{\gamma}\left( f_i(x_i^*) - f_i(x_i) \right)\\
& + \frac{\alpha}{\gamma} \nabla f_i(x_i^*)^T \Delta x_i
\end{array}
\]
where $\eta_i$ is a positive parameter such that $\eta_i > \frac{1}{\mu_i \alpha \gamma}$.
By the strong convexity of $f_i$, one has
\[
\begin{array}{rl}
f_i(x_i^*)
\geq & f_i(x_i) - \nabla f_i(x_i)^T \Delta x_i + \frac{\mu_i}{2} \Delta x_i^T \Delta x_i\\
= & f_i(x_i) - \nabla f_i(x_i)^T \frac{B_{x_i}^{-1}}{\alpha} \left(\alpha B_{x_i} \Delta x_i\right)\\
& + \left( \alpha B_{x_i} \Delta x_i\right)^T \frac{\mu_i B_{x_i}^{-2}}{2 \alpha^2} \left( \alpha B_{x_i} \Delta x_i\right).
\end{array}
\]
Then, by substituting the above inequality into $V_i$, one gets
\begin{align*}
V_i \geq & \frac{\eta_i}{2} \|z_i\|^2 - \frac{1}{\gamma} \Delta x_i^T \Delta \lambda_i + \frac{\alpha}{\gamma} \nabla f_i(x_i^*)^T \frac{B_{x_i}^{-1}}{\alpha}\left( \alpha B_{x_i} \Delta x_i\right)\\
& - \frac{\alpha}{\gamma} \nabla f_i(x_i)^T \frac{B_{x_i}^{-1}}{\alpha} \left(\alpha B_{x_i} \Delta x_i\right)\\
& + \frac{\alpha}{\gamma}\left( \alpha B_{x_i} \Delta x_i\right)^T \frac{\mu_i B_{x_i}^{-2}}{2 \alpha^2} \left( \alpha B_{x_i} \Delta x_i\right)\\
= & \frac{\eta_i}{2} \left\| \alpha B_{x_i} \Delta x_i + \Delta \lambda_i \right\|^2 - \left( \alpha B_{x_i} \Delta x_i\right)^T \frac{B_{x_i}^{-1}}{\alpha \gamma} \Delta \lambda_i\\
& - \left( \alpha B_{x_i} \Delta x_i\right)^T \frac{B_{x_i}^{-1}}{\alpha \gamma} \left( \alpha B_{x_i} \Delta x_i\right)\\
& + \left( \alpha B_{x_i} \Delta x_i\right)^T \frac{\mu_i B_{x_i}^{-2}}{2 \alpha \gamma} \left( \alpha B_{x_i} \Delta x_i\right)\\
= & 
\begin{bmatrix}
\alpha B_{x_i} \Delta x_i\\
\Delta \lambda_i
\end{bmatrix}^T
R_i
\begin{bmatrix}
\alpha B_{x_i} \Delta x_i\\
\Delta \lambda_i
\end{bmatrix}
\end{align*}
where $R_i = \begin{bmatrix}
\frac{\eta_i}{2} I + \frac{\mu_i B_{x_i}^{-2}}{2 \alpha \gamma} -\frac{B_{x_i}^{-1}}{\alpha \gamma} & \frac{\eta_i}{2} I - \frac{B_{x_i}^{-1}}{2 \alpha \gamma}\\
* & \frac{\eta_i}{2}I
\end{bmatrix}$.
By the Schur complement \cite[Proposition 8.2.4]{bernstein2009matrix}, $R_i > 0$ if and only if $\frac{\eta_i}{2} > 0$ and
$$\frac{\eta_i}{2} I + \frac{\mu_i B_{x_i}^{-2}}{2 \alpha \gamma} -\frac{B_{x_i}^{-1}}{\alpha \gamma} - \frac{2}{\eta_i}\left(\frac{\eta_i}{2} I - \frac{B_{x_i}^{-1}}{2 \alpha \gamma} \right)^2 > 0.$$
Select $\eta_i$ such that $\eta_i > \frac{1}{\mu_i \alpha \gamma}$, then the above inequality holds, and $R_i > 0$.
Hence, $V_i > 0$ and $V_i = 0$ if and only if $(x_i,\lambda_i) = (x_i^*,\lambda_i^*)$.\\
It follows from the gradient of $f_i(x_i)$ that
\begin{equation}\label{dot-z_i}
\dot{z}_i = \alpha \nabla^2 f_i(x_i)\Delta \dot{x}_i + \Delta \dot{\lambda}_i
\end{equation}
where
\begin{equation}\label{x_i}
\Delta \dot{x}_i = \dot{x}_i = - z_i + \beta u_i.
\end{equation}
Then the derivative of $V_i$ satisfies
\begin{align*}
\dot{V}_i =& \eta_i z_i^T \left( -\alpha \nabla^2 f_i(x_i)(z_i - \beta u_i)- \gamma u_i \right)\\
& - \frac{1}{\gamma} \left( \Delta x_i^T  (-\gamma u_i) + (-z_i + \beta u_i)^T \Delta \lambda_i \right)\\
& + \frac{\alpha}{\gamma} \left\{ -\left( \nabla f_i(x_i)-\nabla f_i(x_i^*) \right)^T (-z_i + \beta u_i)\right\}\\
=& - \eta_i \alpha z_i^T \nabla^2 f_i(x_i) z_i + \eta_i z_i^T \left(\alpha \beta \nabla^2 f_i(x_i) - \gamma I \right) u_i\\
& + \Delta x_i^T u_i + \frac{1}{\gamma} z_i^T \Delta \lambda_i - \frac{\beta}{\gamma} u_i^T \Delta \lambda_i\\
& + \frac{1}{\gamma} \left( \alpha B_{x_i} \Delta x_i \right)^T z_i - \frac{\beta}{\gamma} \left( \alpha B_{x_i} \Delta x_i \right)^T u_i\\
=& - \eta_i \alpha z_i^T \nabla^2 f_i(x_i) z_i + \eta_i z_i^T \left( \alpha \beta \nabla^2 f_i(x_i) - \gamma I \right) u_i\\
& +  \Delta x_i^T u_i + \frac{1}{\gamma} \|z_i\|^2 - \frac{\beta}{\gamma} z_i^T u_i\\
\leq & - \left(\mu_i \eta_i \alpha - \frac{1}{\gamma} \right) \|z_i\|^2 + y_i^T u_i\\
&+ z_i^T \underbrace{\left\{ \eta_i \left( \alpha \beta \nabla^2 f_i(x_i) - \gamma I \right) - \frac{\beta}{\gamma}I \right\}}_{g_i} u_i\\
\leq & -\left(\mu_i \eta_i \alpha - \frac{1}{\gamma} \right) \Vert z_i \Vert^2 + \Vert z_i\Vert \Vert g_i \Vert \Vert u_i\Vert + y_i^T u_i \numberthis \label{Derivative of V_i}\\
\leq & y_i^T u_i - \nu_i u_i^T u_i
\end{align*}
where the first inequality follows from the strong convexity of $f_i$, the last inequality follows from the inequality of arithmetic and geometric means and $\eta_i > \frac{1}{\mu_i \alpha \gamma}$, and $\nu_i = - \frac{\Vert g_i \Vert^2}{4\left(\mu_i \eta_i \alpha - \frac{1}{\gamma} \right)} \leq 0$.
Since parameters in $g_i$ and $\nabla^2 f_i(x_i)$ are bounded, given finite $\eta_i$, a constant $\nu_i$ can be obtained.
Thus, the subsystem $\Sigma_i$ is IFP($\nu_i$).
\endproof


\subsection{Proof of \Cref{Theorem exponentially convergence}}


Consider the case where $\beta > 0$. Adopt the Lyapunov function candidate $V_{e} = (1 - \delta) \sum_{i \in \mathcal{N}}V_i + \frac{\delta}{2 \beta} \left\| \Delta x \right\|^2$, where $V_i$ was defined in \eqref{Storage function} and $0 < \delta < 1$ is to be decided.

Let us look at the storage function $V_i$ again.
It has been proven in the proof of \Cref{Lemma nonlinear IFP} that $$V_i \geq
\begin{bmatrix}
\alpha B_{x_i} \Delta x_i\\
\Delta \lambda_i
\end{bmatrix}^T
R_i
\begin{bmatrix}
\alpha B_{x_i} \Delta x_i\\
\Delta \lambda_i
\end{bmatrix}
> 0
$$
where $R_i = \begin{bmatrix}
\frac{\eta_i}{2} I + \frac{\mu_i B_{x_i}^{-2}}{2 \alpha \gamma} -\frac{B_{x_i}^{-1}}{\alpha \gamma} & \frac{\eta_i}{2} I - \frac{B_{x_i}^{-1}}{2 \alpha \gamma}\\
* & \frac{\eta_i}{2}I
\end{bmatrix}$. 
Denote
\begin{align*}
R_i - s I =
\begin{bmatrix}
\left( \frac{\eta_i}{2} - s \right) I + \frac{\mu_i B_{x_i}^{-2}}{2 \alpha \gamma} - \frac{B_{x_i}^{-1}}{\alpha \gamma} & \frac{\eta_i}{2} I - \frac{B_{x_i}^{-1}}{2 \alpha \gamma}\\
* & \left(\frac{\eta_i}{2} - s \right) I
\end{bmatrix}.
\end{align*}
To obtain the smallest eigenvalue of $R_i$, let us solve $\text{det}(R_i - s I) = 0$. By the Schur complement \cite[Proposition 8.2.3]{bernstein2009matrix}, it is equivalent to solving
\[
\begin{array}{ll}
& \text{det}\left( \left( \frac{\eta_i}{2}-s \right) I \right) \cdot \text{det}
 \left(
\left( \frac{\eta_i}{2} - s \right) I +
\frac{\mu_i B_{x_i}^{-2}}{2 \alpha \gamma} - \frac{B_{x_i}^{-1}}{\alpha \gamma} \right.\\
& \left.
- \left( \frac{\eta_i}{2} - s \right)^{-1} \left( \frac{\eta_i}{2} I - \frac{B_{x_i}^{-1}}{2 \alpha \gamma} \right)^2
\right) = 0\\
\Rightarrow &\text{det}\left( s^2 I - s \left( \eta_i I + \frac{\mu_i B_{x_i}^{-2}}{2 \alpha \gamma}  - \frac{B_{x_i}^{-1}}{\alpha \gamma}\right) \right.\\
& \left. + 
\frac{B_{x_i}^{-2}}{4 \alpha \gamma} \left(\mu_i \eta_i - \frac{1}{\alpha \gamma} \right) \right)= 0 \numberthis \label{eq: determinant of R_i - sI}
\end{array}
\]
where $\text{det}\left( \left( \frac{\eta_i}{2}-s \right) I \right) \neq 0$ in the above since $\frac{\eta_i}{2}$ is not an eigenvalue to $R_i$.
Notice that $ 0 \leq \frac{1}{l_i}I \leq B_{x_i}^{-1} \leq \frac{1}{\mu_i} I$ by \eqref{eq: bounded Bxi}, then there exists an invertible matrix $T_{x_i} \in \mathbb{R}^{m \times m}$ such that 
\begin{align*}
T_{x_i}^{-1} B_{x_i}^{-1} T_{x_i} = \Lambda_{x_i}
\end{align*}
where $\Lambda_{x_i} \in \mathbb{R}^{m \times m}$ is a diagonal matrix.
Then \eqref{eq: determinant of R_i - sI} becomes
\begin{equation}\label{eq: solve quadratic equation}
\begin{aligned}
& \prod_{j = 1}^{m}\left[ s^2 - s \left( \eta_i + \frac{\mu_i r_j^2}{2 \alpha \gamma} - \frac{r_j}{\alpha \gamma} \right) + \frac{r_j^2}{4 \alpha \gamma} \left( \mu_i \eta_i - \frac{1}{\alpha \gamma} \right) \right]= 0
\end{aligned}
\end{equation}
where $\frac{1}{l_i} \leq r_j \leq \frac{1}{\mu_i} $ is the $j$th diagonal element of $\Lambda_{x_i}$. Since $\eta_i + \frac{\mu_i r_j^2}{2 \alpha \gamma} - \frac{r_j}{\alpha \gamma} > 0$ and $\mu_i \eta_i - \frac{1}{\alpha \gamma} >0$ by \Cref{Lemma nonlinear IFP}, the roots $s_{j}^{\pm}$, $\forall j$ to \eqref{eq: solve quadratic equation} are positive.
Solving \eqref{eq: solve quadratic equation}, we obtain
\[
\begin{array}{rl}
s_j^{-} = & \frac{1}{2} \left( \vphantom{- \sqrt{\left( \eta_i + \frac{\mu_i r_j^2}{2\alpha \gamma} - \right)^2}} \left(\eta_i + \frac{\mu_i r_j^2}{2\alpha \gamma} - \frac{r_j}{\alpha \gamma} \right) \right.\\
& \left. - \sqrt{\left( \eta_i + \frac{\mu_i r_j^2}{2\alpha \gamma} - \frac{r_j}{\alpha \gamma} \right)^2 - \frac{r_j^2}{\alpha \gamma} \left( \mu_i \eta_i - \frac{1}{\alpha \gamma}\right)} \right).
\end{array}
\]
Denote $s_j^{-}(r_j)$ as a function of $r_j$ with an abuse of notation. The smallest eigenvalue of $R_i$ satisfies
\begin{align}\label{eq: smallest eigenvalue of R_i}
s_1(R_i) \geq \varepsilon_i := \min_{\frac{1}{l_i} \leq r_j \leq \frac{1}{\mu_i}} \{s_{j}^{-}(r_j) \},
\end{align}
then 
$
V_i \geq \varepsilon_i \left\|
\begin{smallmatrix}
\alpha B_{x_i} \Delta x_i\\
\Delta \lambda_i
\end{smallmatrix}
\right\|^2.
$

Next, let us derive the upper bound of $V_i$. By the strong convexity,
\begin{align*}
f_{i}(x_{i}^{*}) - f_{i}(x_{i})\leq -\nabla f_i(x_{i}^{*})^{T}\Delta x_{i}-\frac{\mu_i}{2}\left\| \Delta x_{i}\right\|^{2}.
\end{align*}
Substituting the above inequality into $V_i$, we have
\[
\begin{array}{rl}
V_i \leq & \frac{\eta_i}{2} \| z_i \|^2 - \frac{1}{\gamma} \Delta x_i^T \Delta \lambda_i + \frac{\alpha}{\gamma}\left( -\frac{\mu_i}{2} \left\| \Delta x_{i}\right\|^{2} \right)\\
\leq & \frac{\eta_i}{2} \left\| \alpha B_{x_i} \Delta x_i + \Delta \lambda_i \right\|^2
- \frac{1}{\gamma} \left(\alpha B_{x_i} \Delta x_i \right)^T \frac{B_{x_i}^{-1}}{\alpha} \Delta \lambda_i\\
& - \left(\alpha B_{x_i} \Delta x_i \right)^T \frac{\mu_i B_{x_i}^{-2}}{2 \alpha \gamma} \left(\alpha B_{x_i} \Delta x_i \right)\\
= &
\begin{bmatrix}
\alpha B_{x_i} \Delta x_i\\
\Delta \lambda_i
\end{bmatrix}^T
M_i
\begin{bmatrix}
\alpha B_{x_i} \Delta x_i\\
\Delta \lambda_i
\end{bmatrix}
\end{array}
\]
where
$M_i =
\begin{bmatrix}
\frac{\eta_i}{2} I - \frac{\mu_i B_{x_i}^{-2}}{2 \alpha \gamma} & \frac{\eta_i}{2} I - \frac{B_{x_i}^{-1}}{2 \alpha \gamma}\\
* & \frac{\eta_i}{2}I
\end{bmatrix}$.
Denote the matrix
\begin{align*}
\eta_i I - M_i = 
\begin{bmatrix}
\frac{\eta_i}{2} I + \frac{\mu_i B_{x_i}^{-2}}{2 \alpha \gamma} & -\frac{\eta_i}{2} I + \frac{B_{x_i}^{-1}}{2 \alpha \gamma}\\
* & \frac{\eta_i}{2}I
\end{bmatrix}.
\end{align*}
By similar application of the Schur complement \cite[Proposition 8.2.4]{bernstein2009matrix}, we can obtain that $\eta_i I - M_i \geq 0$.

Let $\eta_i = \frac{2}{\mu_i \alpha \gamma}$, satisfying \Cref{Lemma nonlinear IFP}, then $M_i \leq \frac{2}{\mu_i \alpha \gamma} I$. Moreover,
$s_j^{-}(r_j)$ is monotonically increasing with respect to $r_j$. Thus, \eqref{eq: smallest eigenvalue of R_i} leads to
\[
\begin{array}{rl}
\varepsilon_i = s_j^{-} \left(\frac{1}{l_i} \right)
= & \frac{1}{2 \mu_i \alpha \gamma} \left(
\vphantom{\sqrt{ \left( \left(\frac{\mu_i}{l_i} \right)^2 \right)^2}}
2 + \frac{1}{2} \left(\frac{\mu_i}{l_i} \right)^2 - \frac{\mu_i}{l_i} - \right.\\
& \left. \sqrt{ \left( 2 + \frac{1}{2} \left(\frac{\mu_i}{l_i} \right)^2 - \frac{\mu_i}{l_i} \right)^2  - \left(\frac{\mu_i}{l_i} \right)^2} \right).
\end{array}
\] 

Denote $\mu = \min_{i \in \mathcal{N}} \{\mu_i\}$, $l = \max_{i \in \mathcal{N}} \{l_i\}$ and $B_{x} = \text{diag} \left\{ B_{x_1}, \ldots, B_{x_N} \right\}$.
It satisfies that $\nabla f(x) - \nabla f(x^*) = B_{x} \Delta x$ and 
$0 \leq \left\|\Delta x \right\|^2 \leq \frac{1}{\alpha^2 \mu^2} \left\|\alpha B_{x} \Delta x \right\|^2$ due to \eqref{eq: bounded Bxi}.
By the definition of $V_{e}$, we obtain 
\[
\begin{array}{ll}
& (1 - \delta) \min_{i \in \mathcal{N}} \{\varepsilon_i \} 
\left\|
\begin{smallmatrix}
\alpha B_{x} \Delta x\\
\Delta \lambda
\end{smallmatrix}
\right\|^2
\leq V_{e}\\
& \leq \left( (1 - \delta) \frac{2}{\alpha \gamma \mu} + \frac{\delta}{2 \beta \alpha^2 \mu^2} \right)
\left\|
\begin{smallmatrix}
\alpha B_{x} \Delta x\\
\Delta \lambda
\end{smallmatrix}
\right\|^2. \numberthis \label{eq: lower upper bounds of Ve}
\end{array}
\]
Let $\eta_i = \frac{2}{\mu_i \alpha \gamma}$, then $\nu_i = - \frac{ \gamma}{4}\max_{x_i}\{\|g_i\|^2\}$ by \eqref{problem to get nu}, where $g_i$ was defined in \eqref{Derivative of V_i}. Since $\nu_i \leq 0$ by \Cref{Lemma nonlinear IFP}, let us assume without loss of generality that $\max_{i}\{ d^{i} |\nu_i|\} \neq 0$. Choose a constant $\rho \in \left( 0, \frac{\delta \max_{i \in \mathcal{N}} \{ |\nu_i| d^{i} \}}{\max_{i \in \mathcal{N}} \{ d^{i} \}}  \right)$, where $d^{i}$ is the in/out-degree, and then denote 
\begin{align}\label{eq: definition of phi}
\phi = \frac{1 - \delta}{\gamma} - \frac{1 - \delta}{\gamma + \frac{4 \rho}{(1 - \delta) \max_{x_i} \{ \| g_i \|^2 \}}} > 0.
\end{align}
Substituting $\eta_i = \frac{2}{\mu_i \alpha \gamma}$ into \eqref{Derivative of V_i}, the time derivative of $(1 - \delta)V$ satisfies 
\[
\begin{array}{ll}
& (1 - \delta)\dot{V} \\
\leq & - \left( \frac{1 - \delta}{\gamma} \right) \displaystyle \sum_{i \in \mathcal{N}} \| z_i \|^2 + (1 - \delta)\sum_{ i\in\mathcal{N} } \| z_i \| \| g_i \| \| u_i\|\\
& + (1 - \delta)\Delta x^T u\\
= & - \displaystyle \sum_{i \in \mathcal{N}} \left( \frac{1 - \delta}{\gamma} - \frac{1 - \delta}{\gamma + \frac{4 \rho}{(1 - \delta)\| g_i \|^2 }} \right) \| z_i \|^2 + (1 - \delta)\Delta x^T u\\
& - \displaystyle \sum_{i \in \mathcal{N}}\frac{1 - \delta}{\gamma + \frac{4 \rho}{(1 - \delta)\| g_i \|^2 }} \| z_i \|^2 + (1 - \delta) \sum_{i \in \mathcal{N}} \| z_i \| \| g_i \| \| u_i \| \\
\leq & - \phi \| z \|^2
+ \displaystyle \sum_{i \in \mathcal{N}} \frac{(1 - \delta)^2 \| g_i \|^2}{4 (1 - \delta)} \left( \gamma + \frac{ 4 \rho}{(1 - \delta) \| g_i \|^2 } \right) \| u_i \|^2\\
& + (1 - \delta) \Delta x^T u\\
\leq & - \phi \| z \|^2 + \displaystyle \sum_{i \in \mathcal{N}} \left( -(1 - \delta)\nu_i + \rho \right) \| u_i \|^2 + (1 - \delta) \Delta x^T u
\end{array}
\]
where $z = \text{col} (z_1, \ldots, z_N)$, the second inequality follows from \eqref{eq: definition of phi} and the inequality of arithmetic and geometric means similarly to \eqref{Derivative of V_i}, and the last inequality follows from the definition of $\nu_i$. The above manipulation provides a term containing $\|z\|^2$ in order to prove negative semi-definiteness of $\dot{V}_{e}$ later.

Therefore, the time derivative of $V_{e}$ satisfies
\[
\begin{array}{rl}
\dot{V}_e = & (1- \delta) \dot{V} + \frac{\delta}{\beta} \Delta x^T \Delta \dot{x}\\
\leq & - \phi \| z \|^2 + \displaystyle \sum_{i \in \mathcal{N}} \left( -(1 - \delta)\nu_i + \rho \right) \| u_i \|^2 + \Delta x^T u\\
& - \frac{\delta \alpha}{\beta} \Delta x^T B_{x}\Delta x - \frac{\delta}{\beta} \Delta x^T \Delta \lambda.
\end{array}
\]
Replacing $\nu_i$ by $\left( (1 - \delta)\nu_i - \rho \right)$ in \eqref{Derivative of V}, we have
\begin{align*}
& \displaystyle \sum_{i \in \mathcal{N}} \left( -(1 - \delta)\nu_i + \rho \right) \| u_i \|^2 + \Delta x^T u\\
\leq & - \sigma(t) \displaystyle \sum_{i \in \mathcal{N}} \left( \frac{1}{2} - \sigma(t) \left( \rho - (1 - \delta)\nu_i \right) d^{i} \right) \cdot\\
& \displaystyle \sum_{j\in \mathcal{N}_i} a_{ij} \left \| \Delta x_j - \Delta x_i \right \|^2\\
\leq & - \left( \frac{\sigma(t)}{2} - \sigma^2(t) \left( \rho \max_{i \in \mathcal{N}} \{d^{i}\} + (1 - \delta) \max_{i \in \mathcal{N}} \{ |\nu_i| d^{i}\}\right) \right) \cdot\\
& \displaystyle \sum_{i \in \mathcal{N}} \sum_{j\in \mathcal{N}_i} a_{ij} \left\lVert \Delta x_j - \Delta x_i \right\rVert^2\\
\leq & - \varphi \displaystyle \sum_{i \in \mathcal{N}} \sum_{j\in \mathcal{N}_i} a_{ij} \left\lVert \Delta x_j - \Delta x_i \right\rVert^2 = - \varphi \Delta x^T \mathbf{L} \Delta x
\end{align*}
where 
\begin{align*}
& \varphi = \min_{t} \left\{ \frac{\sigma(t)}{2} - \sigma^2(t) ( \rho \max_{i \in \mathcal{N}} \{d^{i}\} + (1 - \delta) \max_{i \in \mathcal{N}} \{ |\nu_i| d^{i}\} )
\right\} \numberthis
\end{align*}
and $\varphi > 0$ by the definition of $\rho$ and condition \eqref{coupling gain switching graphs 2}.

Let us define $\bar{x}$ as the stacked vector of the average value of $x_i$, i.e., $\bar{x} : = \mathbf{1}_{N} \otimes \left(\frac{1}{N} \left(\mathbf{1}_{N} \otimes I_{m}\right)^T x \right)$.
Observe that for any vector $v \in \mathbb{R}^{m}$, $\left(\mathbf{1}_{N} \otimes v \right)^T \left( x - \bar{x} \right) = 0$.
In addition, $\left(\mathbf{1}_{N} \otimes v \right)^T (\mathbf{L}^T + \mathbf{L}) =  \mathbf{1}_{N}^T (L^T + L) \otimes v  = \mathbf{0}$, which implies that $\left( x - \bar{x} \right)$ is orthogonal to all eigenvectors of $\mathbf{L}^T + \mathbf{L}$ associated with zero eigenvalues.
Consequently, we have 
\begin{align}\label{eq: s2L < xLx}
\Delta x^T \mathbf{L} \Delta x = 
\left( x - \bar{x} \right)^T \mathbf{L} \left( x - \bar{x} \right) \geq s_{2} \left\| x - \bar{x} \right\|^2
\end{align}
where the equality follows from $\mathbf{1}_{N}^T {L} =\mathbf{0}$, ${L} \mathbf{1}_{N} = \mathbf{0}$, and $s_{2} : = s_{2}\left( L + L^T \right)$ is the smallest nonzero eigenvalue of $L + L^T$.\\
We can also observe that
\[
\begin{array}{ll}
& \Delta x^T \Delta \lambda\\
= & (x - \bar{x})^T \Delta \lambda + 
\bar{x}^T \Delta \lambda - {x^*}^T \Delta \lambda\\
= & (x - \bar{x})^T \Delta \lambda - {x_i^*}^T (\mathbf{1}_{N} \otimes I_m)^T \Delta \lambda\\
& + \left(\frac{1}{N} \left(\mathbf{1}_{N} \otimes I_{m}\right)^T x \right)^T (\mathbf{1}_{N} \otimes I_m)^T \Delta \lambda\\
= & (x - \bar{x})^T \Delta \lambda
 \numberthis \label{eq: bar_x times lambda equals zero}
\end{array}
\]
where the second equality follows from the Kronecker product and the last equality is due to \eqref{eq: sum of lambda equals zero}.
Consequently,
\begin{align*}
\dot{V}_e
\leq & - \phi \| z \|^2 - \frac{\delta \alpha}{\beta} \Delta x^T B_{x}\Delta x - \varphi s_{2} \left\|x - \bar{x} \right\|^2\\
& - \frac{\delta}{\beta} \left(x - \bar{x}\right)^T \Delta \lambda\\
\leq & - \phi \| z \|^2 - \frac{\delta \alpha}{\beta}\Delta x^T B_{x}\Delta x - \varphi s_{2} \left\|x - \bar{x} \right\|^2\\
    & + \frac{\delta}{\beta} \left( \frac{\theta}{2} \left\|x - \bar{x} \right\|^2 + \frac{1}{2\theta} \left\|\Delta \lambda\right\|^2 \right)\\
= & 
    - \begin{bmatrix}
    \alpha B_{x} \Delta x \\ \Delta \lambda
    \end{bmatrix}^T
    \begin{bmatrix}
    \phi I + \frac{\delta}{\alpha \beta} B_{x}^{-1} & \phi I\\
    * & \phi I - \frac{\delta}{2 \beta \theta}I
\end{bmatrix}
\begin{bmatrix}
    \alpha B_{x} \Delta x \\ \Delta \lambda
\end{bmatrix}\\
& - \left( \varphi s_2 - \frac{\delta \theta}{2\beta} \right) \left\|x - \bar{x} \right\|^2\\
\leq & 
    - \begin{bmatrix}
    \alpha B_{x} \Delta x \\ \Delta \lambda
    \end{bmatrix}^T
    \underbrace{
    \begin{bmatrix}
    \phi I + \frac{\delta}{\alpha \beta l}I & \phi I\\
    * & \phi I - \frac{\delta}{2 \beta \theta}I
\end{bmatrix}
    }_{Q}
\begin{bmatrix}
\alpha B_{x} \Delta x \\ \Delta \lambda
\end{bmatrix}\\
& - \left( \varphi s_2 - \frac{\delta\theta}{2 \beta} \right) \left\|x - \bar{x} \right\|^2
\end{align*}
where the first inequality follows from \eqref{eq: s2L < xLx}, \eqref{eq: bar_x times lambda equals zero}, the second inequality follows from the Young's inequality with $\theta > 0$, the equality follows from \eqref{z_i}, and the last inequality follows from \eqref{eq: bounded Bxi}.
Observe that $\dot{V}_e$ is negative definite if $Q > 0$ and $\left( \varphi s_2 - \frac{\delta\theta}{2 \beta} \right) > 0$, i.e., the following conditions hold,
\[
\begin{array}{ll}
    2 \beta \varphi s_2 - \delta \theta > 0, ~~
    2 \theta - \alpha l > 0,\\
    2 \beta \phi \theta - \delta > 0,~~ (2 \theta - \alpha l) \beta \phi - \delta > 0.
\end{array}
\]
Choose $\theta = \alpha l$, then the above conditions become
$\delta < \alpha \beta l \phi $ and $\delta < \frac{2 \beta \varphi s_2}{\alpha l}$.
Though $\phi (\delta)$, $\varphi (\delta)$ are functions of $\delta$, it is obvious that $\phi (\delta), \varphi (\delta) > 0$ when $\delta \rightarrow 0$.
Then there always exists a small enough $\delta \in \left(0, \min\left\{ \alpha \beta l \phi, \frac{2 \beta \varphi s_2}{\alpha l}\right\} \right)$ such that the above conditions are satisfied and $\dot{V_{e}}$ is negative definite.

Next, by calculations,
$
    \dot{V_{e}} \leq - s_{1}(Q)
    \left\|
    \begin{smallmatrix}
    \alpha B_{x} \Delta x \\ \Delta \lambda
    \end{smallmatrix} \right\|^2
$,
where $s_1(Q) = \frac{2\phi + \frac{\delta}{2\alpha \beta l} - \sqrt{4\phi^2 + \frac{9\delta^2}{4\alpha^2 \beta^2 l^2} } }{2} > 0$ is the smallest eigenvalue of $Q$.
Then, by the exponential stability theorem \cite[Theorem 4.10]{khalil1996noninear}, 
we have $V_{e}(t) \leq V_{e}(0) e^{-\epsilon t}$,
where
\begin{equation*}
\epsilon = \frac{s_{1}(Q)}{(1 - \delta) \frac{2}{\alpha \gamma \mu} + \frac{\delta}{2 \beta \alpha^2 \mu^2}} = \frac{2\phi + \frac{\delta}{2\alpha \beta l} - \sqrt{4\phi^2 + \frac{9\delta^2}{4\alpha^2 \beta^2 l^2} } }{(1 - \delta) \frac{4}{\alpha \gamma \mu} + \frac{\delta}{\beta \alpha^2 \mu^2}}
\end{equation*}
due to \eqref{eq: lower upper bounds of Ve}, and
\begin{align*}
& \left\|
\begin{smallmatrix}
    \alpha B_{x} \Delta x \\ \Delta \lambda
    \end{smallmatrix} \right\|
\leq 
\left( \frac{ (1 - \delta) \frac{2}{\alpha \gamma \mu} + \frac{\delta}{2 \beta \alpha^2 \mu^2} }{(1 - \delta) \min_{i \in \mathcal{N}} \{\varepsilon_i \} } \right)^{\frac{1}{2}}
\left\| \begin{smallmatrix}
    \alpha B_{x} \Delta x (0) \\ \Delta \lambda (0)
    \end{smallmatrix} \right\| e^{- \frac{\epsilon t}{2}}.
\end{align*}
Recall that $\left\|\Delta x \right\| \leq \frac{1}{\alpha \mu} \left\|\alpha B_{x}\Delta x \right\| \leq \frac{l}{\mu} \left\|\Delta x \right\|$ and $B_{x}\Delta x = \mathbf{0}$ if and only if $\Delta x = \mathbf{0}$ due to \eqref{eq: bounded Bxi}.
Then we finally obtain 
\begin{align*}
&\left\|
\begin{smallmatrix}
\Delta x \\
\Delta \lambda
\end{smallmatrix}
\right\|
\leq 
\frac{l}{\mu} \left( \frac{ (1 - \delta) \frac{2}{\alpha \gamma \mu} + \frac{\delta}{2 \beta \alpha^2 \mu^2} }{(1 - \delta) \min_{i \in \mathcal{N}} \{\varepsilon_i \} } \right)^{\frac{1}{2}} \left\|
\begin{smallmatrix}
\Delta x(0) \\
\Delta \lambda(0)
\end{smallmatrix}
\right\| e^{-\frac{\epsilon t}{2}}
\end{align*}
for any $t \geq 0$.

The cases for $\beta < 0$ and $\beta = 0$ can be proved similarly by taking $V_{e} = (1+ \delta) V - \frac{\delta}{2 \beta} \left\| \Delta x \right\|^2$ and $\dot{V}_{e} = V + \frac{\delta}{2} \left\| \Delta x \right\|^2$, respectively.


\subsection{Proof of \Cref{Lemma The matrix is nonsingular}}

To prove the nonsingularity, we first give some lemmas as follows.
\begin{lemma}\label{Lemma invertible}
Given a real matrix $Q \in \mathbb{R}^{m \times m}$, the matrix $(I + Q)$ is invertible if and only if $-1$ is not an eigenvalue of $Q$.
\end{lemma}
\begin{proof}
$(I+Q)$ is invertible if and only if $\det(I+Q) \neq 0$. In other words, the characteristic polynomial $p_{-Q}(s)$ at $1$ is nonzero, i.e., $p_{-Q}(1) = \det(I+Q) \neq 0$. Since eigenvalues are the roots of the characteristic polynomial, it means that $1$ is not an eigenvalue of $-Q$, or equivalent, $-1$ is not an eigenvalue of $Q$.
\end{proof}
\begin{lemma}\label{Lemma non-negative eigenvalues of products}
Let $L \in \mathbb{R}^{N \times N}$ be the Laplacian matrix for a weight-balanced graph $\mathcal{G}$, $M \in \mathbb{R}^{N \times N}$ be a diagonal matrix and $M \geq 0$, then the eigenvalues of $M L$ or $L M$ have non-negative real parts.
\end{lemma}
\begin{proof}
Let us consider the case of $M L$. Since $M$ is diagonal, by direct calculation,
\begin{equation}
ML = 
\begin{bmatrix}
M_{11}L_{11} & \ldots & M_{11} L_{1N}\\
\vdots & \ddots & \vdots\\
M_{NN} L_{N1} & \ldots & M_{NN} L_{NN}
\end{bmatrix}.
\end{equation}
Since $M \geq 0$ and $\mathcal{G}$ is weight-balanced, we have $ M_{ii}L_{ii} = \sum_{j \neq i} \left| M_{ii} L_{ij} \right |$ for all $i$, implying that $ML$ is diagonal dominant. By the Gershgorin circle theorem \cite[Fact 4.10.16]{bernstein2009matrix},  the real parts of the eigenvalues of $ML$ remain non-negative. The case of $LM$ can be proved similarly.
\end{proof}
We are now ready to prove the nonsingularity.

Take $m = 1$ without loss of generality. Recall that $\nu_i \leq 0$, $\forall i \in \mathcal{N}$ by \Cref{Lemma nonlinear IFP}. The eigenvalues of $- \mathbf{L}(t)\nu$ have non-negative real parts by \Cref{Lemma non-negative eigenvalues of products}. In addition, $\sigma(t) > 0$, thus $-1$ is not an eigenvalue of $- \sigma(t) \mathbf{L}(t) \nu $. By \Cref{Lemma invertible}, $\left(I - \sigma(t) \mathbf{L}(t) \nu \right)$ is invertible and hence nonsingular.


\ifCLASSOPTIONcaptionsoff
 \newpage
\fi



%

\bibliographystyle{IEEEtran}
\bibliography{References}

%

\begin{IEEEbiography}[{\includegraphics[width=1in,height=1.25in,clip,keepaspectratio]{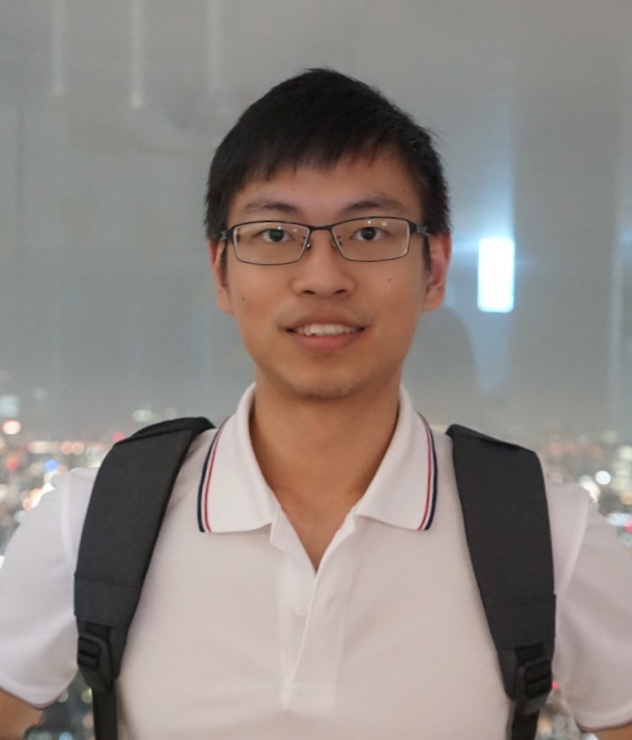}}]{Mengmou Li} received his B.S. degree in Physics from Zhejiang University, China, in 2016, and the Ph.D. degree in Electrical and Electronic Engineering from the University of Hong Kong, in 2020. From October 2018 to December 2018, he was a visiting student in Academy of Mathematics and Systems Science, Chinese Academy of Sciences (CAS), China. From September 2019 to December 2019, he was a visiting researcher in School of Engineering, Osaka University, Japan. His research interests include distributed optimization, consensus and synchronization, passivity.
\end{IEEEbiography}

\begin{IEEEbiography}[{\includegraphics[width=1in,height=1.25in,clip,keepaspectratio]{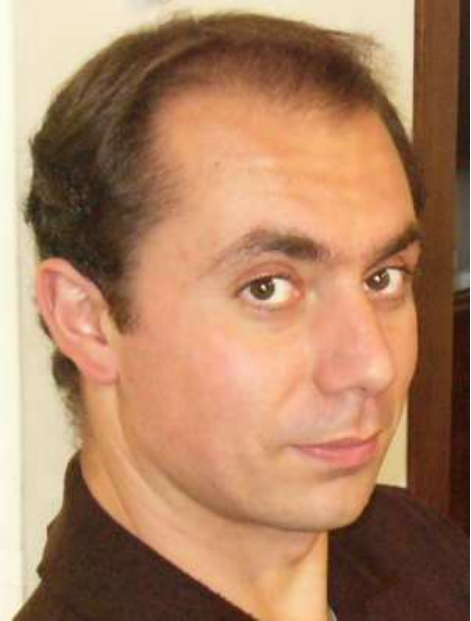}}]{Graziano Chesi} is a Professor at the Department of Electrical and Electronic Engineering of the University of Hong Kong. He received the Laurea in Information Engineering (summa cum laude and encomium) and the Best Student Award of the Faculty of Engineering from the University of Florence in 1997, and the PhD in Systems Engineering from the University of Bologna in 2001. He joined the University of Siena in 2000, and the University of Hong Kong in 2006. He served as Associate Editor for various journals, including Automatica, the European Journal of Control, the IEEE Control Systems Letters, the IEEE Transactions on Automatic Control, the IEEE Transactions on Computational Biology and Bioinformatics, and Systems and Control Letters. He also served as Guest Editor for the IEEE Transactions on Automatic Control, for the International Journal of Robust and Nonlinear Control, and for Mechatronics. He founded and served as chair of the Technical Committee on Systems with Uncertainty of the IEEE Control Systems Society. He also served as chair of the Best Student Paper Award Committees for the IEEE Conference on Decision and Control and the IEEE Multi-Conference on Systems and Control. He is author of the books ``Homogeneous Polynomial Forms for Robustness Analysis of Uncertain Systems'' (Springer 2009) and ``Domain of Attraction: Analysis and Control via SOS Programming'' (Springer 2011). He is a Fellow of the IEEE.
\end{IEEEbiography}

\begin{IEEEbiography}[{\includegraphics[width=1in,height=1.25in,clip,keepaspectratio]{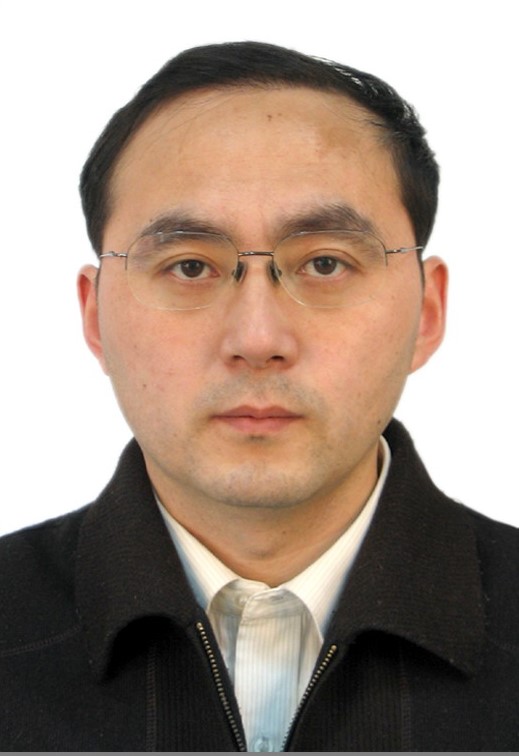}}]{Yiguang Hong} received his B.S. and M.S. degrees from Dept of Mechanics of Peking University, China, and the Ph.D. degree from the Chinese Academy of Sciences (CAS), China. He is currently a professor in Academy of Mathematics and Systems Science, CAS. Also, he is a Fellow of IEEE, a Fellow of Chinese Association for Artificial Intelligence, and a Fellow of Chinese Association of Automation (CAA). Additionally, he is the chair of Technical Committee of Control Theory of CAA and was a board of governor of IEEE Control Systems Society.
His current research interests include nonlinear control, multi-agent systems, distributed optimization and game, machine learning, and social networks. He serves as Editor-in-Chief of Control Theory and Technology. He also serves or served as Associate Editors for many journals including the IEEE Transactions on Automatic Control, IEEE Transactions on Control of Network Systems, and IEEE Control Systems Magazine. Moreover, he is a recipient of the Guang Zhaozhi Award at the Chinese Control Conference, Young Author Prize of the IFAC World Congress, Young Scientist Award of CAS, the Youth Award for Science and Technology of China, and the National Natural Science Prize of China.
\end{IEEEbiography}



\end{document}